\newtheorem{theorem}{Theorem}[section]
\newcommand{\ra}[1]{\renewcommand{\arraystretch}{#1}}
\newcommand{\overbar}[1]{\mkern 1.5mu\overline{\mkern-1.5mu#1\mkern-1.5mu}\mkern 1.5mu}
\newcommand{\overtilde}[1]{\mkern 1.5mu\tilde{\mkern-1.5mu#1\mkern-1.5mu}\mkern 1.5mu}
\newcommand{\f}[2]{\frac{#1}{#2}}
\newcommand{\mb}[1]{\mathbf{#1}}
\DeclareMathAlphabet\mathbfcal{OMS}{cmsy}{b}{n}
\renewcommand{\d}{\mathop{}\!\mathrm{d}} 
\newcommand{\p}{\partial}
\newcommand{\flyp}{f_{\mathrm{lyp}}}
\newcommand{\bflyp}{\bar{f}_{\mathrm{lyp}}}
\title{Adjoint-based Hopf-bifurcation Instability Suppression via \\ First Lyapunov Coefficient}
\author{Sicheng He\footnote{Assistant Professor, AIAA member, sicheng@utk.edu}}
\author{Max Howell\footnote{Graduate Student, mhowel30@vols.utk.edu}}
\affil{University of Tennessee, Knoxville, TN 37996}
\author{Daning Huang\footnote{Assistant Professor, AIAA member, daning@psu.edu}}
\affil{The Pennsylvania State University, University Park, PA 16801}
\author{Eirikur Jonsson\footnote{Research Fellow, AIAA member, eirikurj@umich.edu}}
\affil{University of Michigan, Department of Aerospace Engineering, Ann Arbor, MI 48109}
\author{Galen W. Ng\footnote{Ph.D. Candidate, AIAA member, nggw@umich.edu}}
\affil{University of Michigan, Department of Naval Architecture and Marine Engineering, Ann Arbor, MI 48109}
\author{Joaquim R. R. A. Martins\footnote{Pauline M. Sherman Collegiate Professor, AIAA Fellow, jrram@umich.edu}}
\affil{University of Michigan, Department of Aerospace Engineering, Ann Arbor, MI 48109}
\date{\today}
\definecolor{rev1}{HTML}{FF999A} 
\definecolor{rev2}{HTML}{F3F298} 
\definecolor{rev3}{HTML}{B2E0AE} 
\definecolor{other}{HTML}{C8C7FF} 
\begin{document}
\maketitle{}

\begin{abstract}
Many physical systems exhibit limit cycle oscillations induced by Hopf bifurcations.
{In aerospace engineering, limit cycle oscillations arise from undesirable Hopf-bifurcation phenomena such as aeroelastic flutter and transonic buffet}. 
{In some cases, the resulting limit cycle oscillations can themselves be unstable, leading to amplitude divergence or hysteretic transitions that threaten structural integrity and performance.}
{Avoiding such phenomena when performing gradient-based design optimization requires a constraint that quantifies the stability of the bifurcations and the derivative of that constraint with respect to the design variables.}
To capture the {local} stability of bifurcations, we leverage the first Lyapunov coefficient, {which predicts whether the resulting limit cycle oscillation is stable or unstable.}
We develop an accurate and efficient method for computing derivatives of the first Lyapunov coefficient.
{We leverage the adjoint method and reverse algorithmic differentiation to efficiently compute the derivative of the first Lyapunov coefficient.}
We demonstrate the efficacy of the proposed adjoint method in {three} design optimization problems that suppress {unstable bifurcation}: {an algebraic Hopf bifurcation model,}
{an aeroelastic model of a typical section}, and {a nonlinear problem based on the complex Ginzburg--Landau partial differential equation}. 
{While the current formulation addresses only a single bifurcation mode}, the proposed adjoint shows great potential for efficiently handling Hopf bifurcation constraints in large-scale nonlinear problems governed by partial differential equations.
Its accuracy, versatility and scalability make it a promising tool for aeroelastic and aerodynamic design optimization as well as other engineering problems involving Hopf bifurcation instabilities.
\end{abstract}

\section{Introduction}

Bifurcation is the shift from a linearly stable to a linearly unstable solution due to a single parameter change.
This occurrence involves several mechanisms, including the saddle node, pitchfork, transcritical, and Hopf bifurcation~\cite{Kuznetsov2004}.
The Hopf-bifurcation is frequently encountered in phenomena relevant to aircraft design.
Examples of such phenomena include flutter, laminar-turbulent transition, Kármán vortex street, and transonic buffet.
Besides the bifurcation parameter, another essential characteristic of the bifurcation is its stability.
A stable Hopf-bifurcation leads to a supercritical limit cycle oscillation (LCO), which is a benign response.
In contrast, an unstable Hopf-bifurcation leads to a subcritical LCO, which can be bi-stable and exhibit a hysteretic response.
In addition, bifurcation can lead to semi-stable LCOs.
Unstable Hopf-bifurcations should be eliminated from safe designs.
Subcritical bifurcations are frequently encountered {experimentally} in flutter~\cite{Tang2001,Drachinsky2022}.
In this paper, we develop an accurate and efficient method for integrating a constraint on the stability of Hopf bifurcations into gradient-based design optimization using the first Lyapunov coefficient and an adjoint formulation.

Significant progress has been achieved in tackling optimization problems constrained by ordinary differential equations (ODEs) and partial differential equations (PDEs).
In the aerospace industry, various types of dynamical system solutions have been explored and applied in design optimization.
{Equilibrium point solutions have been explored}~\cite{Jameson1988,Kenway2014c}, {however these problems are limited because cases involving non-stationary operating conditions frequently involve more complex dynamical system solutions.
To address this issue, solutions of problems involving bifurcations}~\cite{Marquet2008,Shi2020a,Shi2023a,Church2019,Boulle2022a,Boull2023}, {LCOs}~\cite{Thomas2020,Prasad2022,He2022b,He2021c,Riso2023,Golla2024}, {and chaotic dynamics}~\cite{Wang2014,RepolhoCagliari2021} {have been explored in recent research.
These solutions can be applied to complex cases such as aeroelastic flutter and transonic buffet, which are Hopf-bifurcations characterized by a limit-cycle oscillation (LCO) in the post-flutter response.} 

To increase energy efficiency, next-generation aircraft wings and large floating offshore wind turbines will have larger aspect ratios and be more flexible~\cite{Afonso2017}.
Such designs are more likely to exhibit undesirable dynamic behavior at {non-stationary operating conditions}.
Thus, it is critical to develop methods to efficiently and accurately consider the bifurcation constraints in a multidisciplinary design optimization (MDO) framework.
Furthermore, MDO problems, such as aerodynamic and aerostructural optimization, usually involve a large number of design variables, requiring the use of gradient-based optimization~\cite{Lyu2014f,Kenway2014c}.
Such optimizations rely on efficient gradient computation algorithms that scale well with the number of design variables~\cite[Ch. 6]{Martins2022}.
Hence, the challenges for bifurcation-constrained optimization are two-fold: (1) how to quantify the stability of bifurcation? (2) how to compute the gradient of such a quantity?

\begin{figure}[h]
\centering
\includegraphics[angle=0,width=0.7\linewidth]{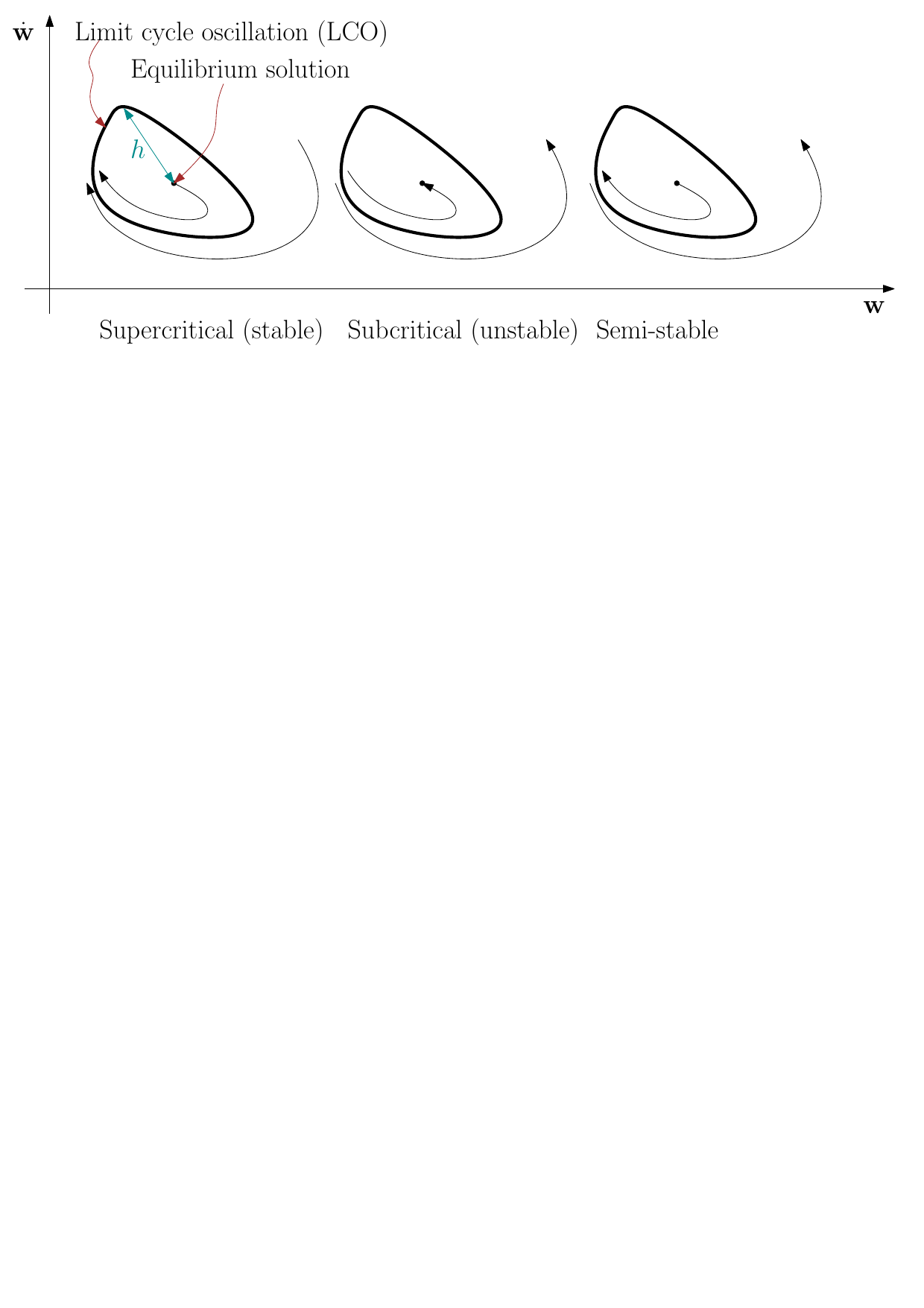}
\caption{Stable, unstable, and semi-stable Hopf bifurcations. For each case, the motion magnitude, $h$, approaches the limit $h \rightarrow 0$ to measure the bifurcation stability at the equilibrium point. The direction of motion for solutions in the vicinity of the limit cycle oscillation (LCO) is indicated by the arrows.}
\label{fig:sketch}
\end{figure}

The stability of Hopf-bifurcation can be assessed through three primary categories of methods:
(1) methods based on Floquet theory~\citep{Friedmann1977,Chicone2006}, (2) methods based on the LCO curve, i.e., LCO {amplitude}-LCO parameter response curve~\citep{He2023a,Riso2020,Riso2023a}, and (3) center manifold-based 
methods~\citep{Kuznetsov2004,Stanford2013a,Habib2015,Gai2016a,Malher2017a}.
The Floquet theory- and LCO curve-based method use a similar strategy because both analyze the post-bifurcation response, and bifurcation stability is determined by considering motion magnitude approaching zero, as shown in Fig.~\ref{fig:sketch}, {where the resulting supercritical LCO converges to an equilibrium solution}.

Floquet theory-based methods posit that the stability of a periodic dynamical system depends on the eigenvalue distribution of the monodromy matrix.
This matrix is obtained by evaluating the fundamental solution matrix of the periodic system at times $t=0$ and $t=T$, where $T$ represents one period of the system \cite[Ch.~2.4]{Chicone2006}.
In practice, acquiring the monodromy matrix involves simulating the time-periodic dynamical system $N$ times over one period, where $N$ is the number of states of the dynamical system.
However, the computational cost of this process is high for problems governed by PDEs, particularly when $N$ is large, as seen in high-fidelity aeroelastic systems studied by \citet{Thomas2002nx} and \citet{He2019b}. 
In response to this challenge, \citet{Bauchau2001} proposed using the Arnoldi algorithm to compute the dominant eigenvalue instead of calculating all eigenvalues to assess the stability of a periodic dynamical system.
This approach is appealing because it avoids explicit matrix formation.
Instead, the approach requires only matrix-vector products through the Arnoldi algorithm, where the response vector can be evaluated using time integration for one period with a specified initial condition.
However, this approach requires an unsteady adjoint for derivative computation, which requires a substantial implementation effort and is computationally intensive~\citep{Cao2003}.
\citet{Seyranian2000} introduced forward mode differentiation of Floquet analysis to bypass the need for an unsteady adjoint.
However, the computational cost of this derivative computation method scales with the number of design variables, which is undesirable for large-scale MDO.

The LCO curve-based methods leverage the fact that in an LCO {amplitude}-LCO parameter response curve, the region that has a negative slope is unstable.
By studying the stability of an LCO with a small motion magnitude, this class of methods can be used to study bifurcation stability.
Leveraging the earlier work of time spectral-based LCO equation by \citet{Thomas2002nx}, and its adjoint counterpart by \citet{He2022b}, \citet{He2023a} proposed using a polynomial fit of the LCO curve to study its stability.
The adjoint method is used to efficiently and accurately compute the derivatives.
{However, only one flight condition and mode shape was used in the analysis, which may not account for all flutter mechanisms.}
\citet{Riso2021a,Riso2023a} proposed an approximate recovery rate as a metric for post-flutter analysis and gradient-based optimization, avoiding the need for expensive transient simulations.
While successful in eliminating subcritical characteristics and accounting for mode switches and hump modes, {the approximation of the recovery rate metric may lead to a conservative design.
Furthermore, the adjoint derivatives were not developed, potentially limiting its application.} 
Besides the aforementioned methods, other LCO computation methods like the continuation method~\citep{Dimitriadis2008,Shukla2017b} can also be extended to study bifurcation stability leveraging the LCO curve.
{However, the continuation method requires iterative solvers to converge to a solution at each point on the LCO curve, making gradient based optimization expensive. 
Therefore, this method is outside the scope of this study.}

The center manifold-based methods posit that the stability of a solution in a high-dimensional dynamical system near the bifurcation point can be characterized by a low-dimensional manifold known as the center manifold~\citep{Kuznetsov2004,Stanford2013a,Habib2015,Gai2016a,Malher2017a}.
The center manifold for all Hopf bifurcations has a dimension of two regardless of the system's state dimension.
This two-dimensional center manifold corresponds to a purely imaginary conjugate pair of eigenvalues.
The remaining dimensions correspond to eigenvalues with negative real parts which are ultimately damped. 
Using the Jacobian, Hessian, and third-order derivative matrices (tensors) computed on the center manifold, we could determine the stability of a bifurcation by the first Lyapunov coefficient. 
{These higher order derivatives may be difficult to compute for higher dimensional problems.
However, these terms can be estimated using second-order accurate finite differences}~\cite{Beran1999} {and scaled up to finite element method problems}~\cite{Stanford2013a} {to efficiently calculate the first Lyapunov coefficient for large problems.}
The first Lyapunov coefficient is positive for subcritical bifurcations (unstable) and negative for supercritical bifurcations (stable).
A zero first Lyapunov coefficient cannot be used to determine the bifurcation stability (indeterministic); in this case, higher-order Lyapunov coefficients are needed to determine stability.
When compared to the LCO curve-based methods, the key advantage of center manifold-based methods is that the latter only requires information at one bifurcation parameter (e.g., one flight condition).
Center manifold-based methods have been explored in the literature.
For example, \citet{Stanford2013a} utilized a multi-scale method to capture the bifurcation diagram near the bifurcation point and conducted optimization to mitigate the subcritical response using {finite differences} for derivative computation.
However, the computational cost of the {finite difference} method is proportional to the number of design variables, which is undesirable for large-scale MDO.

To overcome the scalability challenge for bifurcation-constrained MDO problems, we develop a Hopf-bifurcation stability analysis based on the first Lyapunov coefficient and a scalable implementation of its gradient computation.
The proposed method builds on a three-level stability analysis framework, {which operates in the outlined modules} in Fig.~\ref{fig:Hopf_schematic}. 
First, the linear stability analysis solves the Jacobian matrix eigenvalue problem (EVP) of an equilibrium point to determine if the system is stable, unstable, and marginally stable. 
{Next, the Hopf-bifurcation analysis captures the exact onset of the bifurcation point by calculating the critical onset condition and bifurcation frequency.
This module only considers one bifurcation instability mode.
A limitation of this method is that it cannot handle changing bifurcation modes during the optimization.}
Lastly, the Hopf-bifurcation stability analysis captures whether the resulting LCO is stable or not.
This step is a ``local'' method---it can predict the LCO stability in the vicinity of the bifurcation point, but it cannot determine the stability of a large magnitude LCO.
{The three aforementioned steps of assessing Hopf-bifurcation stability are shown in Fig.}~\ref{fig:Hopf_schematic}.
The proposed method utilizes the first two columns: linear stability and bifurcation stability.
{The third column shows the nonlinear stability analysis for post bifurcation trajectories.}

We addressed two challenges to establish this framework. 
First, the first Lyapunov coefficient has a complicated functional form that contains matrix inversion, higher-order partial derivative matrix-vector products, and complex analytic function operation.
This poses a challenge in its derivative computation.
Second, the derivative computation across the three levels involves the complex interaction between the underlying equilibrium point solution and the multiple stability modules~\cite{He2025a}.
This requires carefully designed algorithms to minimize the computational cost.

\begin{figure}[H]
\centering
\includegraphics[angle=0,width=0.8\linewidth]{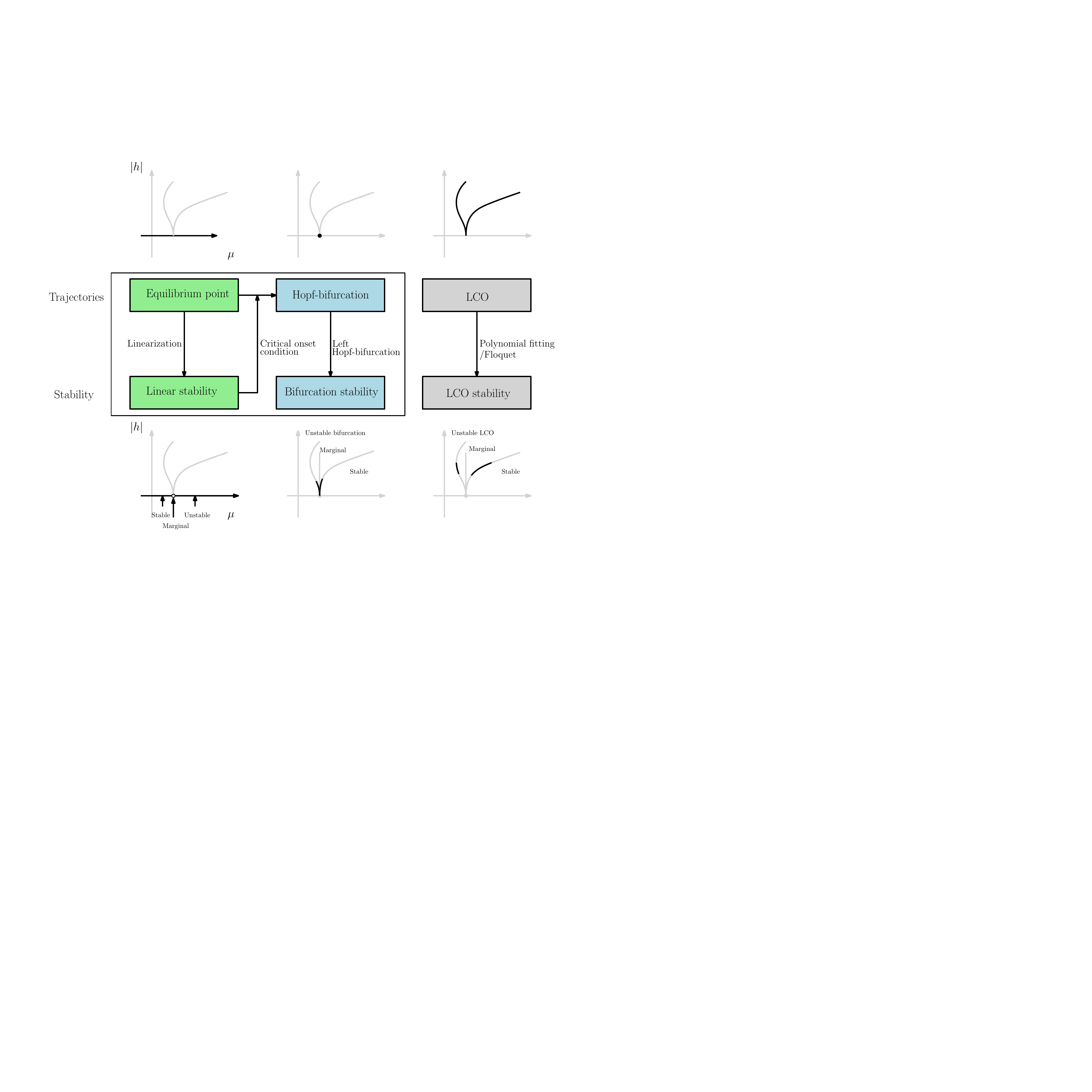}
\caption{{Three levels of stability analysis related to Hopf bifurcation: linear, Hopf bifurcation, and LCO stability. The proposed method builds on the first two columns.}}
\label{fig:Hopf_schematic}
\end{figure}

Flow control, aerodynamic, and aerostructural optimization frequently involve a large number of design variables, necessitating gradient-based optimization~\cite[Ch.~1]{Martins2022}.
It is also crucial to efficiently compute the derivatives of the objective and constraint functions with respect to these numerous design variables.
There are two types of methods whose computational cost scales weakly with the number of design variables: reverse algorithmic differentiation (RAD)~\cite{Hascoet:2004:T2U} and the adjoint method~\cite{Jameson1988}.

To enable the accurate gradient computation of highly complex functions, we can resort to algorithmic differentiation (AD), a well-established technique for systematically computing derivatives using the chain rule~\cite{Hascoet:2004:T2U}.
AD can be divided into two main types: forward AD (FAD) and reverse AD (RAD), which differ in the order in which the chain rule is applied.
FAD computes derivatives in a forward direction, propagating from inputs to outputs, while RAD operates in reverse, starting from outputs and tracing back to inputs.
The computational cost of FAD scales with the number of inputs, whereas that of RAD scales with the number of outputs~\cite[Sec.~6.6.4]{Martins2022}.

AD can be implemented through operator overloading, source code transformation, or by leveraging analytic AD formulae for specific foundational matrix operations such as inversion, products, and computation of eigenvalues and eigenvectors~\cite{Dwyer1948,Giles2008a}. 
Using analytic formulae in AD {enables optimized libraries to compute derivatives} without differentiating the underlying library source code.
\Citet{Giles2008a} proposed a method to derive RAD formulae for real functions from FAD formulae, which are typically easier to obtain.
\Citet{He2022a} extended this method to complex analytic functions.

Beyond explicit analytic methodologies such as AD, implicit analytic methods, including direct and adjoint approaches, are also viable alternatives~\cite[Chapter~6.7]{Martins2022}.
{The direct method is advantageous when the number of outputs exceeds the number of inputs, whereas the adjoint method is more efficient when the inputs outnumber the outputs.}
The adjoint method forms the basis for methods applied in stability prediction for optimization~\cite{Thomas2020,Prasad2022,He2022b,He2023a,Kennedy2014d,Boull2023,Marquet2008,Mettot2014}.
Despite advances in flutter-constrained aircraft design optimization~\cite{Jonsson2019b}, direct methods are less common in optimization due to their computational cost.

In this paper, we leverage RAD and the complex dot product identity tool for efficient RAD formula derivation~\cite{He2023} to compute the partial derivatives of the first Lyapunov coefficient.
We then use these partial derivatives in an efficient adjoint method to compute the Hopf-bifurcation derivative accurately and efficiently with respect to a large number of design variables.
The contributions of the paper are summarized as follows:
(1) We develop an efficient and accurate bifurcation stability derivative computation framework based on the adjoint method; (2) We derive an efficient RAD formula for the first Lyapunov coefficient; (3) We apply the developed adjoint method to a nonlinear aeroelastic {model of a typical airfoil section} {and the nonlinear complex Ginzburg--Landau equation} to suppress subcritical bifurcation.
{It should be noted that the present formulation assumes a single, simple pair of complex-conjugate eigenvalues at the Hopf point and therefore does not yet address cases involving repeated or nearly degenerate eigenvalues, which may require a multi-mode extension of the framework.}

The paper is organized as follows.
In Section~\ref{sec:equilibrium}, we present the governing equations for the equilibrium point and its higher derivatives required for bifurcation stability analysis.
Next, we present the eigenvalue analysis for the bifurcation problem in Section~\ref{sec:eigen}.
Then, we present the first Lyapunov coefficient used to determine the Hopf-bifurcation stability in Section~\ref{sec:lyapunov_coeff}.
We summarize the Hopf stability analysis presented in Section~\ref{sec:analysis}, building on the developments from Sections~\ref{sec:equilibrium}, \ref{sec:eigen}, and~\ref{sec:lyapunov_coeff}.
In Section~\ref{sec:stability_der}, we develop the adjoint method and derive the RAD formula for the first Lyapunov coefficient.
We demonstrate the general optimization problem in Section~\ref{sec:opt}.
In Section~\ref{sec:example}, the proposed method is applied to analyze and optimize bifurcation stability using {an algebraic Hopf-bifurcation example}, a nonlinear aeroelastic flutter test case {of a typical section model}, and {the complex Ginzburg--Landau partial differential equation}.
We finish with concluding remarks in Section~\ref{sec:con}.

\section{Equilibrium point and Taylor series expansion}
\label{sec:equilibrium}

This section presents the governing equations for the dynamical system and their Taylor series expansion around the equilibrium point.
Differentiation of these equations yields the Jacobian, Hessian, and third-order derivative matrices that are essential for formulating the eigenvalue problems in Section~\ref{sec:eigen} and evaluating the first Lyapunov coefficient in Section~\ref{sec:lyapunov_coeff}. 

\subsection{Equilibrium point}
The dynamical system can be written as
\begin{equation}
\label{eq:nonlinear_dyn}
\frac{\d\mb{w}}{\d t} = \mb{r}(\mb{w}, \mu, \mb{x}),
\end{equation}
where $\mb{r}:\mathbb{R}^{n}\times\mathbb{R}\times\mathbb{R}^{n_{\mb{x}}} \rightarrow \mathbb{R}^{n}$ is some nonlinear function, $\mb{w}\in\mathbb{R}^{n}$ is the state variable vector, $\mu\in \mathbb{R}$ is the bifurcation parameter, and $\mb{x}\in\mathbb{R}^{n_{\mb{x}}}$ is the design variable vector.
The bifurcation parameter, $\mu$, determines the onset of the bifurcation, while the critical value at which the bifurcation occurs is not known in advance.
The bifurcation can be characterized by various physical parameters, such as flow speed in flutter and angle-of-attack in transonic buffet.
The critical bifurcation parameter is determined by the design variable vector, $\mb{x}$.
The equilibrium solution $\overtilde{\mb{w}}\in\mathbb{R}^n$ is obtained by solving the following steady-state equation for a given $\mu$ and $\mb{x}$: 
\begin{equation}
\label{eq:equilibrium}
\mb{r}(\overtilde{\mb{w}}, \mu, \mb{x}) = 0.
\end{equation}
The steady-state equation solution parametrizes the EVP and the Hopf bifurcation stability, which will be discussed in Section~\ref{sec:eigen}.

\subsection{Taylor series expansion}

For linear stability analysis, we only need to consider the linear term involving the Jacobian matrix.
However, the Hopf-bifurcation stability requires additional second- and third-order partial derivative terms.
We perform a Taylor series expansion around an equilibrium solution.
The state, $\mb{w}$, can be decomposed into an equilibrium solution and a perturbed state $\delta \mb{w}$ as follows~\cite{Li2023a}:
\begin{equation}
\mb{w} = \overtilde{\mb{w}} + \delta\mb{w}.
\end{equation}
Then, Eq.~\ref{eq:nonlinear_dyn} can be written as
\begin{equation}
\label{eq:nonlinear_dyn_decomp}
\begin{aligned}
\f{\d \overtilde{\mb{w}} + \delta\mb{w}}{\d t}
= & \mb{r}(\overtilde{\mb{w}} + \delta\mb{w}, \mu, \mb{x})\\
\f{\d \delta\mb{w}}{\d t}
= & \mb{r}(\overtilde{\mb{w}}, \mu, \mb{x}) + \mb{A}(\overtilde{\mb{w}}, \mu, \mb{x})\delta \mb{w} + \f{1}{2}\mb{b}(\delta \mb{w}, \delta \mb{w}, \overtilde{\mb{w}}, \mu, \mb{x}) + \f{1}{6} \mb{c}(\delta \mb{w}, \delta \mb{w}, \delta \mb{w}, \overtilde{\mb{w}}, \mu, \mb{x}) + \mathcal{O}\left(\norm{\delta\mb{w}}^4 \right) \\
\f{\d \delta\mb{w}}{\d t}
= & \mb{A}(\overtilde{\mb{w}}, \mu, \mb{x})\delta \mb{w} + \f{1}{2}\mb{b}(\delta \mb{w}, \delta \mb{w},\overtilde{\mb{w}}, \mu, \mb{x}) + \f{1}{6} \mb{c}(\delta \mb{w}, \delta \mb{w}, \delta \mb{w}, \overtilde{\mb{w}}, \mu, \mb{x}) + \mathcal{O}\left(\norm{\delta\mb{w}}^4 \right)                                            \\
\end{aligned},
\end{equation}
where we substituted Eq.~\ref{eq:equilibrium} to simplify $\d \left( \delta \mb{w} \right)/\d t$, $\mb{A}\in\mathbb{R}^{n\times n}$ is the Jacobian of $\mb{r}$, and $\mb{b}$ and $\mb{c}$ are the Hessian and third-order partial derivative matrices, respectively.
The $i$th entries of $\mb{b}$ and $\mb{c}$ are
\begin{equation}
\label{eq:bc}
\begin{aligned}
\mb{b}_i(\mb{y}_1, \mb{y}_2, \overtilde{\mb{w}}, \mu, \mb{x}) =           & \sum_{j,k=1}^{n}\f{\p^2 \mb{r}_i}{\p \mb{w}_j \p \mb{w}_k}\left(\overtilde{\mb{w}}, \mu, \mb{x}\right)  \mb{y}_{1,j} \mb{y}_{2,k},                             \\
\mb{c}_i(\mb{y}_1, \mb{y}_2, \mb{y}_3, \overtilde{\mb{w}}, \mu, \mb{x}) = & \sum_{j, k, l=1}^{n}\f{\p^3 \mb{r}_i}{\p \mb{w}_j \p \mb{w}_k \p \mb{w}_l}\left(\overtilde{\mb{w}}, \mu, \mb{x}\right) \mb{y}_{1,j} \mb{y}_{2,k} \mb{y}_{3,l}, \\
\end{aligned}
\end{equation}
where $i=1, \ldots, n$, and $\mb{y}_1, \mb{y}_2, \mb{y}_3\in\mathbb{R}^{n}$ are arbitrary vectors.

{In more complex problems where analytical solutions do not exist for the higher order derivatives, $\mb{b}$ and $\mb{c}$ can be estimated using second order accurate finite differences as shown by} \citet{Beran1999}. 
{This method only requires a total of six calls to the Jacobian matrix to calculate both the $\mb{b}$ and $\mb{c}$ vectors, reducing computational cost significantly.}
{We implement and demonstrate this approach in the PDE example in} Section~\ref{sec:CGL_example}.

\section{Eigenvalue problems (EVPs) for Hopf bifurcation}
\label{sec:eigen}
This section formulates the right and left EVPs at the Hopf bifurcation that are amenable for subsequent gradient computation.
The left EVP is included because it is required in the computation of the first Lyapunov coefficient (Section~\ref{sec:lyapunov_coeff}).

\subsection{Right eigenvalue problem}

We now derive the equation that governs the dynamical system at the bifurcation point.
At the bifurcation point, the bifurcation parameter, {$\mu_{\mathrm{bif}}$}, and the equilibrium solution, {$\overtilde{\mb{w}}_{\mathrm{bif}}$}, defines a Jacobian matrix, $\mb{A}(\mu_{\mathrm{bif}}, \overtilde{\mb{w}}_{\mathrm{bif}})$, where the following equation is satisfied:
\begin{equation}
\label{eq:bifurcation}
\begin{aligned}
\mb{A} {\mb{q}_R}_{\mathrm{bif}} =    & \lambda_{R,\mathrm{bif}}{\mb{q}_R}_{\mathrm{bif}}, \\
\mathrm{Re}(\lambda_{R, \mathrm{bif}}) = & 0,                                               \\
\end{aligned}
\end{equation}
where $\lambda_{R, \mathrm{bif}} \in\mathbb{C}, {\mb{q}_R}_{\mathrm{bif}} \in\mathbb{C}^{n}$ are the right (indicated by the subscript $\square_{R}$) eigenvalues and eigenvectors, respectively.
{At the bifurcation point, the derivative $\d \mathrm{Re}(\lambda_{R, \mathrm{bif}}) / \d \mu_{\text{bif}}$ must be positive.
This condition can be enforced explicitly when using a root solving method.}
For Hopf-bifurcation, the following two eigenpairs satisfy Eq.~\ref{eq:bifurcation}:
\begin{equation}
(j{\omega}_{\mathrm{bif}}, {\mb{q}_R}_{\mathrm{bif}}), (-j{\omega}_{\mathrm{bif}}, {\mb{q}_R}^*_{\mathrm{bif}}),
\end{equation}
where {${\omega}_{\mathrm{bif}} > 0$} is the bifurcation frequency, the superscript $\square^{*}$ is the conjugate transpose operator, and $j$ denotes the imaginary unit.

Thus, the eigenvalue problem in Eq.~\ref{eq:bifurcation} at the Hopf-bifurcation point can be written as
\begin{equation}
\label{eq:r_eig}
\begin{aligned}
\mb{A}{\mb{q}_R}_{\mathrm{bif}} =                       & j{\omega}_{\mathrm{bif}}{\mb{q}_R}_{\mathrm{bif}}, \\
{\mb{q}_R}_{\mathrm{bif}}^* {\mb{q}_R}_{\mathrm{bif}} = & 1,                                                   \\
\mb{e}_k^\intercal {\mb{q}_R}_{\mathrm{bif}, i} =       & 0,
\end{aligned}
\end{equation}
where $k$ is the index corresponding to the entry with the maximum magnitude, and $\square_{i}$ denotes the operator to extract the imaginary part of the complex variable, ${\mb{q}_R}_{\mathrm{bif}, i} = \mathrm{Im}\left({\mb{q}_R}_{\mathrm{bif}}\right)$.
The index $k$ is defined by
\begin{equation}
k = \mathrm{argmax}_{l=1, \ldots, n} \norm{{\mb{q}_R}_{\mathrm{bif}, i, l}},
\end{equation}
where ${\mb{q}_R}_{\mathrm{bif}, i, l}$ is the $l^{\mathrm{th}}$ entry of the eigenvector ${\mb{q}_R}_{\mathrm{bif}, i}$.
The second and third rows in Eq.~\ref{eq:r_eig} are magnitude and phase constraints, which make the solution unique.
Without these two constraints, the eigenvector can stretch and rotate in the complex plane, {making the derivatives calculated in subsequent modules ambiguous.
Also, the magnitude constraint adds numerical stability because it normalizes the eigenvectors.}
\citet{He2023} derive the eigenvalue adjoint equations in more detail.

We can write Eq.~\ref{eq:r_eig} purely in terms of real numbers by expanding the complex equations into two real equations.
The state variable at the bifurcation point satisfies the following set of equations:
\begin{equation}
\label{eq:REVP}
\mb{r}_{R}(\mb{u}_R, \mb{x}) =
\begin{bmatrix}
\mb{r}({\mu}_{\mathrm{bif}}, {\overtilde{\mb{w}}}_{\mathrm{bif}}, \mb{x})                                                                                      \\
\mb{A}({\mu}_{\mathrm{bif}}, {\overtilde{\mb{w}}}_{\mathrm{bif}}, \mb{x}) {\mb{q}_R}_{\mathrm{bif}, r} + {\omega}_{\mathrm{bif}}{\mb{q}_R}_{\mathrm{bif}, i} \\
\mb{A}({\mu}_{\mathrm{bif}}, {\overtilde{\mb{w}}}_{\mathrm{bif}}, \mb{x}) {\mb{q}_R}_{\mathrm{bif}, i} - {\omega}_{\mathrm{bif}}{\mb{q}_R}_{\mathrm{bif}, r} \\
{\mb{q}_R}_{\mathrm{bif}, r}^\intercal {\mb{q}_R}_{\mathrm{bif}, r} + {\mb{q}_R}_{\mathrm{bif}, i}^\intercal {\mb{q}_R}_{\mathrm{bif}, i} - 1                      \\
\mb{e}_j^\intercal {\mb{q}_R}_{\mathrm{bif}, i}
\end{bmatrix}
= \mb{0}, \quad
\mb{u}_R =
\begin{bmatrix}
{\overtilde{\mb{w}}}_{\mathrm{bif}} \\
{\mb{q}_R}_{\mathrm{bif}, r}          \\
{\mb{q}_R}_{\mathrm{bif}, i}          \\
{\mu}_{\mathrm{bif}}                \\
{\omega}_{\mathrm{bif}}             \\
\end{bmatrix},
\end{equation}
where $\mb{r}_R$ denotes the right eigenvalue problem in residual form and $\mb{u}_R$ is the combined state vector.
The first equation is the equilibrium solution, and the other four equations are the right EVP for the bifurcation.

\subsection{Left eigenvalue problem}
The left eigenvalue problem is defined as,
\begin{equation}
\label{eq:l_eig}
\begin{aligned}
\mb{A}^\intercal {\mb{q}_L}_{\mathrm{bif}} =            & - j {\omega}_{\mathrm{bif}}{\mb{q}_L}_{\mathrm{bif}}, \\
{\mb{q}_R}_{\mathrm{bif}}^* {\mb{q}_L}_{\mathrm{bif}} = & 1,                                                      \\
\end{aligned}
\end{equation}
where ${\mb{q}_L}_{\mathrm{bif}}$ is one left eigenvector, and ${\mb{q}_L}_{\mathrm{bif}, r}$, and ${\mb{q}_L}_{\mathrm{bif}, i}$ are the real and imaginary parts, respectively.
The second row is a normalization condition.
The choice of this special normalization condition is motivated by the first Lyapunov coefficient discussed in Section~\ref{sec:lyapunov_coeff}.
Similarly to the right eigenvalue problem, we can write Eq.~\ref{eq:l_eig} in real space as
\begin{equation}
\label{eq:LEVP}
\mb{r}_{L}({\mb{u}_L}, \mb{x}, \mb{u}_R)=
\begin{bmatrix}
\mb{r}({\mu}_{\mathrm{bif}}, {\overtilde{\mb{w}}}_{\mathrm{bif}}, \mb{x})                                                                                                \\
\mb{A}^\intercal({\mu}_{\mathrm{bif}}, {\overtilde{\mb{w}}}_{\mathrm{bif}}, \mb{x}) {\mb{q}_L}_{\mathrm{bif}, r} - {\omega}_{\mathrm{bif}}{\mb{q}_L}_{\mathrm{bif}, i} \\
\mb{A}^\intercal({\mu}_{\mathrm{bif}}, {\overtilde{\mb{w}}}_{\mathrm{bif}}, \mb{x}) {\mb{q}_L}_{\mathrm{bif}, i} + {\omega}_{\mathrm{bif}}{\mb{q}_L}_{\mathrm{bif}, r} \\
{\mb{q}_R}_{\mathrm{bif}, r}^\intercal {\mb{q}_L}_{\mathrm{bif}, r} + {\mb{q}_R}_{\mathrm{bif}, i}^\intercal {\mb{q}_L}_{\mathrm{bif}, i} - 1                                \\
- {\mb{q}_R}_{\mathrm{bif}, i}^\intercal {\mb{q}_L}_{\mathrm{bif}, r} + {\mb{q}_R}_{\mathrm{bif}, r}^\intercal {\mb{q}_L}_{\mathrm{bif}, i}                                  \\
\end{bmatrix}
=\mb{0}, \quad
{\mb{u}_L}=
\begin{bmatrix}
{\overtilde{\mb{w}}}_{\mathrm{bif}} \\
{\mb{q}_L}_{\mathrm{bif}, r}          \\
{\mb{q}_L}_{\mathrm{bif}, i}          \\
{\mu}_{\mathrm{bif}}                \\
{\omega}_{\mathrm{bif}}             \\
\end{bmatrix}.
\end{equation}
where $\mb{r}_L$ denotes the left eigenvalue problem in residual form, and $\mb{u}_L$ is the combined state vector.

The bifurcation problem can be formulated using either the right or the left EVP.
The underlying equilibrium point solution, bifurcation parameter, and bifurcation frequency computed using both methods are equal to each other.
Thus, for the proposed stability metric, we need the bifurcation parameter, frequency, and the equilibrium state vector---taken from the right EVP problem solution---and the eigenvectors from both the left and right EVPs.

\section{First Lyapunov coefficient}
\label{sec:lyapunov_coeff}

The first Lyapunov coefficient is a metric that can be used to determine the stability of Hopf bifurcation.
It is defined in Theorem~\ref{thm:lyp} as stated below.
The sign of the first Lyapunov coefficient determines the stability of a Hopf bifurcation solution.
\begin{theorem}
\label{thm:lyp}
The first Lyapunov coefficient, $f_{\text{Lyp}}$, is defined as follows
\begin{equation}
\label{eq:lyp}
f_{\mathrm{lyp}}({\omega}_{\mathrm{bif}}, {\mb{q}_L}_{\mathrm{bif}},{\mb{q}_R}_{\mathrm{bif}}, \mb{A}, \mb{B}, \mb{C}) =  \f{1}{2{\omega}_{\mathrm{bif}}} \mathrm{Re}\left(h_1 - 2 h_2 + h_3\right),
\end{equation}
where
\begin{equation}
\begin{aligned}
h_1 =                                                                                                       & {\mb{q}_L}_{\mathrm{bif}}^* \mb{c}({\mb{q}_R}_{\mathrm{bif}}, {\mb{q}_R}_{\mathrm{bif}}, {\mb{q}_R}^*_{\mathrm{bif}}),                                                            \\
h_2 =                                                                                                       & {\mb{q}_L}_{\mathrm{bif}}^* \mb{b}({\mb{q}_R}_{\mathrm{bif}}, \mb{A}^{-1}\mb{b}({\mb{q}_R}_{\mathrm{bif}}, {\mb{q}_R}^*_{\mathrm{bif}})),                                         \\
h_3 =                                                                                                       & {\mb{q}_L}_{\mathrm{bif}}^* \mb{b}({\mb{q}_R}_{\mathrm{bif}}^*, (2j{\omega}_{\mathrm{bif}}\mb{I}_n - \mb{A})^{-1}\mb{b}({\mb{q}_R}_{\mathrm{bif}}, {\mb{q}_R}_{\mathrm{bif}})). \\
\end{aligned}
\end{equation}
Here, $\mb{B}$ and $\mb{C}$ are tensors that store the coefficient data for $\mb{b}(\cdot)$ and $\mb{c}(\cdot)$, respectively.
Based on the sign of the first Lyapunov coefficient, $f_{\mathrm{lyp}}$, we can determine the stability of the bifurcation:
\begin{equation}
\begin{cases}
\text{Stable,} & \text{if } f_{\mathrm{lyp}}<0\\
\text{Indeterminate} & \text{if } f_{\mathrm{lyp}} = 0 \\ 
\text{Unstable,} & \text{if } f_{\mathrm{lyp}}>0 \\
\end{cases},
\end{equation}
\end{theorem}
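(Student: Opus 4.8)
The plan is to establish both the formula and the sign criterion by the classical route of center-manifold reduction followed by reduction of the resulting planar flow to its Poincar\'e--Birkhoff normal form, from which the stability of the emerging cycle is read off directly~\cite{Kuznetsov2004}. Throughout I abbreviate the critical right and left eigenvectors ${\mb{q}_R}_{\mathrm{bif}}$ and ${\mb{q}_L}_{\mathrm{bif}}$ as $\mb{q}_R$ and $\mb{q}_L$, with $\mb{A}\mb{q}_R = j\omega_{\mathrm{bif}}\mb{q}_R$, $\mb{A}^\intercal\mb{q}_L = -j\omega_{\mathrm{bif}}\mb{q}_L$, and the normalizations $\mb{q}_R^*\mb{q}_R = 1$ and $\mb{q}_L^*\mb{q}_R = 1$ from Eqs.~\ref{eq:r_eig} and~\ref{eq:l_eig}.

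First I would split $\mathbb{R}^n$ into the two-dimensional critical eigenspace $T^c = \mathrm{span}\{\mb{q}_R, \mb{q}_R^*\}$ and the complementary subspace $T^s$ on which $\mb{A}$ is hyperbolic (all eigenvalues with negative real part, by the Hopf hypothesis). Introducing the complex coordinate $z = \mb{q}_L^*\,\delta\mb{w}$, I write $\delta\mb{w} = z\mb{q}_R + \bar z\,\mb{q}_R^* + \mb{v}(z,\bar z)$ and seek the reduced dynamics with $\mb{v} = \tfrac12\mb{v}_{20}z^2 + \mb{v}_{11}z\bar z + \tfrac12\mb{v}_{02}\bar z^2 + \mathcal{O}(|z|^3)$. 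Substituting this ansatz together with the symmetric multilinear forms $\mb{b}$ and $\mb{c}$ of Eq.~\ref{eq:bc} into the perturbation dynamics of Eq.~\ref{eq:nonlinear_dyn_decomp} and matching powers of $z$ and $\bar z$ produces the homological equations for the second-order coefficients. Because the frequencies $0$ and $2j\omega_{\mathrm{bif}}$ of the $z\bar z$ and $z^2$ monomials avoid the critical pair $\pm j\omega_{\mathrm{bif}}$, these linear systems are solvable and fix $\mb{v}_{11} = -\mb{A}^{-1}\mb{b}(\mb{q}_R, \mb{q}_R^*)$ and $\mb{v}_{20} = (2j\omega_{\mathrm{bif}}\mb{I}_n - \mb{A})^{-1}\mb{b}(\mb{q}_R, \mb{q}_R)$; these are exactly the resolvents that appear inside $h_2$ and $h_3$.

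Second, I would project the full dynamics back onto the center coordinate by left-multiplying with $\mb{q}_L^*$, obtaining $\dot z = j\omega_{\mathrm{bif}}z + \sum \tfrac{1}{k!\,l!}g_{kl}z^k\bar z^l$. The only cubic monomial that is resonant---and therefore cannot be removed by a near-identity normal-form transformation---is $z^2\bar z$. Collecting its coefficient, the direct cubic term contributes $h_1 = \mb{q}_L^*\mb{c}(\mb{q}_R, \mb{q}_R, \mb{q}_R^*)$, the feedback $\mb{q}_L^*\mb{b}(\mb{q}_R, \mb{v}_{11})$ of the zero-frequency term contributes $-2h_2$, and the feedback $\mb{q}_L^*\mb{b}(\mb{q}_R^*, \mb{v}_{20})$ of the second-harmonic term contributes $h_3$, so that $g_{21} = h_1 - 2h_2 + h_3$. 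A convenient feature of this invariant expression is that retaining the \emph{full}-space resolvents, rather than their restrictions to $T^s$, automatically folds in the quadratic normal-form corrections of type $g_{20}g_{11}$, so no separate Poincar\'e transformation step is required~\cite{Kuznetsov2004}. Writing the planar flow slightly off criticality in polar form $\dot\rho = \alpha\rho + \mathrm{Re}(c_1)\rho^3 + \cdots$ with $c_1\propto g_{21}$ and $\alpha = \mathrm{Re}(\lambda_{R,\mathrm{bif}})$, the bifurcating cycle has radius $\rho^\star = \sqrt{-\alpha/\mathrm{Re}(c_1)}$ and is supercritical and stable when $\mathrm{Re}(g_{21})<0$ and subcritical and unstable when $\mathrm{Re}(g_{21})>0$. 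Since $\flyp = \tfrac{1}{2\omega_{\mathrm{bif}}}\mathrm{Re}(h_1 - 2h_2 + h_3)$ and $\omega_{\mathrm{bif}}>0$, the sign of $\flyp$ agrees with that of $\mathrm{Re}(g_{21})$, which yields the stated stable/indeterminate/unstable trichotomy, the degenerate case $\flyp = 0$ requiring higher Lyapunov coefficients.

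The main obstacle is the bookkeeping in the coefficient collection: correctly counting the combinatorial multiplicities of the symmetric forms $\mb{b}$ and $\mb{c}$, verifying that every non-resonant quadratic and cubic monomial is removable without perturbing $g_{21}$ at cubic order, and confirming that the normalization $\mb{q}_L^*\mb{q}_R = 1$ of Eq.~\ref{eq:l_eig} yields precisely the integer coefficients $1$, $-2$, $1$ multiplying $h_1$, $h_2$, $h_3$ rather than spurious prefactors. A secondary point is the invertibility hypotheses: $\mb{A}^{-1}$ is well defined because zero is not in the spectrum (the bifurcation is Hopf rather than a steady bifurcation), and $(2j\omega_{\mathrm{bif}}\mb{I}_n - \mb{A})^{-1}$ exists provided $2j\omega_{\mathrm{bif}}$ avoids the spectrum of $\mb{A}$ (absence of a $1{:}2$ resonance), both guaranteed by the simple, nondegenerate Hopf assumption stated in the introduction.
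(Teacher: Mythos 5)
Your proposal is correct and takes essentially the same route as the paper, whose entire proof is a citation to \citet[Section~5.4]{Kuznetsov2004}: the center-manifold reduction, the homological equations giving $\mb{v}_{11}=-\mb{A}^{-1}\mb{b}(\mb{q}_R,\mb{q}_R^*)$ and $\mb{v}_{20}=(2j\omega_{\mathrm{bif}}\mb{I}_n-\mb{A})^{-1}\mb{b}(\mb{q}_R,\mb{q}_R)$, and the invariant full-space resolvent expression that absorbs the $g_{20}g_{11}$-type corrections are precisely Kuznetsov's argument, including his nondegeneracy hypotheses guaranteeing the two matrix inverses exist. You have simply written out the proof the paper delegates to the reference.
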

\begin{proof}
See \citet[Section 5.4]{Kuznetsov2004}.
\end{proof}

If an eigenvector is scaled (i.e., stretched and rotated in the complex plane) it still remains an eigenvector.
Such scaling should not affect a conclusion on bifurcation stability or, equivalently, the sign of the first Lyapunov coefficient (Eq.~\ref{eq:lyp}). 
{As an additional measure to ensure the sign invariance to scaling in our formulation and also ensure that derivatives are well defined}, we require the following normalization condition:
\begin{equation*}
{\mb{q}_R}_{\mathrm{bif}}^* {\mb{q}_L}_{\mathrm{bif}} = 1,
\end{equation*}
which is proposed in Eq.~\ref{eq:l_eig}.
The proof of sign invariance under the above normalization is provided in Appendix~\ref{sec:normalization}.

\section{Hopf bifurcation stability analysis}
\label{sec:analysis}
We now describe the Hopf bifurcation stability analysis and the information required for computing the stability metric. 
The Hopf bifurcation stability analysis requires information from the equilibrium point Eq.~\ref{eq:equilibrium} (solution and first-, second-, and third-order partial derivative matrices) and the solution of the right and left EVPs (Eqs.~\ref{eq:REVP} and \ref{eq:LEVP}).
The stability problem in residual form can be written as
\begin{equation}
\label{eq:stability}
\mb{r}_{\mathrm{Hopf}}(\mb{u}, \mb{x}) =
\begin{bmatrix}
\mb{r}_R \\
\mb{r}_L
\end{bmatrix},\quad
\mb{u} = \begin{bmatrix}
\mb{u}_R \\
\mb{u}_L
\end{bmatrix}.
\end{equation}
Figure~\ref{fig:schematic} shows an eXtended Design Structure Matrix (XDSM) diagram~\cite{Lambe2012a} of the process.
As mentioned above, the left and right EVPs are solved separately.
One way to solve these nonlinear equations is to use the Newton method.
However, the choice of nonlinear solver is not unique, and other methods such as nonlinear block Gauss--Seidel method may also be applicable.
The input of the left and right EVPs are the design variables, and the output are the state variables including the critical bifurcation parameters, the bifurcation eigenvectors, and the equilibrium point solution.
Some of the outputs are duplicated, and we take those from the right EVPs.
Afterwards, the first-, second-, and third-order partial derivative matrices are constructed using the bifurcation solution.
Finally, the Lyapunov coefficient is evaluated using the aforementioned matrices and the left- and right-eigenvector solutions from the left- and right-vectors, respectively.
\begin{figure}[H]
\centering
\includegraphics[angle=0,width=0.9\linewidth]{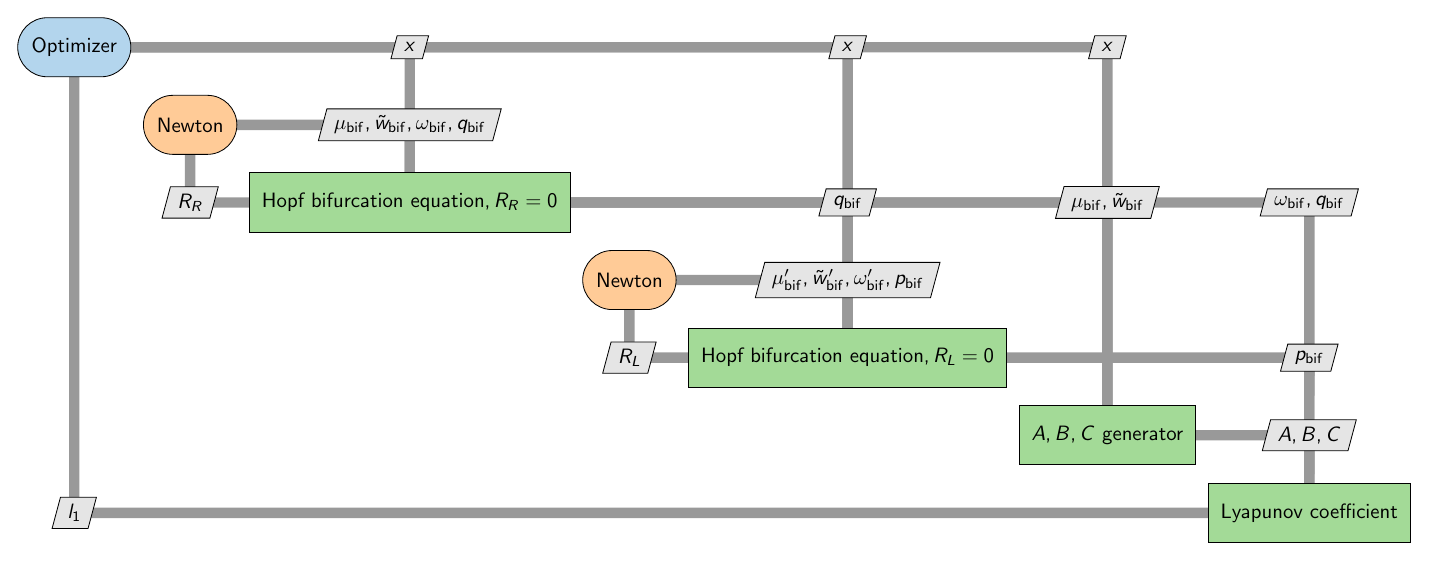}
\caption{XDSM for the bifurcation stability analysis and optimization.}
\label{fig:schematic}
\end{figure}

\section{Hopf bifurcation stability derivative computation}
\label{sec:stability_der}

To apply gradient-based optimization method to solve the optimization problem, we need to compute the derivative of functions of interest with respect to the design variables.
Specifically, we compute the derivative of the Hopf bifurcation stability parameter, which appears as a stability constraint in later optimization problems, using the coupled adjoint method~\cite[Sec.~13.3.3]{Martins2022}. 
We derive the adjoint equation for a general function of interest, $f$, and the Hopf bifurcation parameter is a special case of it.
For a general function of interest, $f$, defined as 
\begin{equation}
f = f(\mb{u},\mb{x}) .
\end{equation}
Then, we demonstrate that the coupled adjoint can be solved sequentially to reduce the computational cost.
The derivative can be computed using the following adjoint equation
\begin{equation}
\begin{aligned}
\f{\d f}{\d \mb{x}} =                                                 & \f{\p f}{\p \mb{x}} - \boldsymbol{\psi}^\intercal \f{\p \mb{r}_{\mathrm{Hopf}}}{\p \mb{x}} \\\
\f{\p \mb{r}_{\mathrm{Hopf}}}{\p\mb{u}}^\intercal \boldsymbol{\psi} = & \f{\p f}{\p \mb{u}}^\intercal.
\end{aligned}
\end{equation}

Expanding the above adjoint equation using Eq.~\ref{eq:stability}, and noting the independence of $\mb{r}_{R}$ with respect to $\mb{u}_L$ in Eq.~\ref{eq:REVP}, we have
\begin{equation}
\begin{bmatrix}
\f{\p \mb{r}_R}{\p \mb{u}_R}^\intercal & \f{\p \mb{r}_L}{\p \mb{u}_R}^\intercal \\
\mb{O}                                 & \f{\p \mb{r}_L}{\p \mb{u}_L}^\intercal
\end{bmatrix}
\begin{bmatrix}
\boldsymbol{\psi}_R \\
\boldsymbol{\psi}_L
\end{bmatrix}
=
\begin{bmatrix}
\f{\p f}{\p \mb{u}_R}^\intercal \\
\f{\p f}{\p \mb{u}_L}^\intercal
\end{bmatrix}.
\end{equation}
We can solve these two equations using block back-substitution to obtain 
\begin{equation}
\label{eq:seq_adjoint}
\begin{aligned}
\f{\p \mb{r}_L}{\p \mb{u}_L}^\intercal \boldsymbol{\psi}_L= & \f{\p f}{\p \mb{u}_L}^\intercal,                                                              \\
\f{\p \mb{r}_R}{\p \mb{u}_R}^\intercal \boldsymbol{\psi}_R= & \f{\p f}{\p \mb{u}_R}^\intercal - \f{\p \mb{r}_L}{\p \mb{u}_R}^\intercal \boldsymbol{\psi}_L.
\end{aligned}
\end{equation}
Block back-substitution decomposes the original coupled system of linear equations into two half-size decoupled subsystems of linear equations.
This is beneficial because each subsystem is smaller and cheaper to solve, and the approach can leverage existing code and solvers for each subsystem.
More specifically, if there is already a bifurcation adjoint solver, then the bifurcation stability adjoint computation can directly call the existing solution, significantly reducing the overall development effort.
For more details about the two adjoint EVP equations, see Appendices~\ref{sec:adjoint_L} and \ref{sec:adjoint_R}.

The total derivative can be written as
\begin{equation}
\label{eq:tot_der}
\f{\d f}{\d \mb{x}} = \f{\p f}{\p \mb{x}} - \boldsymbol{\psi}_R^\intercal \f{\p \mb{r}_R}{\p \mb{x}} - \boldsymbol{\psi}_L^\intercal \f{\p \mb{r}_L}{\p \mb{x}}.
\end{equation}
In contrast to many explicitly defined functions of interest, the first Lyapunov coefficient Eq.~\ref{eq:lyp} is an implicit function, meaning that to evaluate the function value, additional equations need to be solved.
This is because the matrix inversion in Eq.~\ref{eq:lyp} is typically implicitly evaluated via a linear solution.
In addition, the formula also involves Jacobian, Hessian, and third-order partial derivative matrices (tensors), {which must be differentiated with respect to the design variables.}

We have derived an analytic RAD formula leveraging the complex analytic function dot-product identity tool developed by \citet{He2023}, with details presented in Appendix~\ref{sec:partial_l}.
The RAD formulation of $f_{\text{Lyp}}$ contains two adjoint equations.
Thus, to compute the partial derivatives of the first Lyapunov coefficient, we need to solve two $(3n+3) \times (3n+3)$ and two $(n\times n)$ linear system problems, where $n$ is the state dimension.
However, this computational cost does not depend on the number of design variables and is thus {applicable} to large-scale optimization problems.

\section{Hopf bifurcation stability-constrained optimization problem formulation}
\label{sec:opt}

{The first Lyapunov coefficient plays a crucial role in our methodology by enabling us to determine the stability of Hopf bifurcations using only local information about the system near the bifurcation point. 
Unlike methods that require computing the entire branch of limit cycle oscillations, the first Lyapunov coefficient allows us to characterize the bifurcation as subcritical or supercritical based solely on the right and left eigenvectors and the system's Jacobian, Hessian, and third-order derivatives evaluated at the bifurcation point itself. 
This makes the first Lyapunov coefficient a highly efficient metric for stability analysis and an ideal candidate for integration into gradient-based optimization frameworks.}

{A general} Hopf bifurcation stability-constrained optimization problem is formulated as follows
\begin{equation}
\label{eq:general_optimization_statement}
\begin{aligned}
\min \quad        & f(\tilde{\mb{w}}(\mb{x}), \mu(\mb{x}), \mb{x})           \\
\text{by varying} \quad & \mb{x}                                                   \\
\text{subject to} \quad  & f_{\mathrm{lyp}}(\mb{u}_R(\mb{x}), \mb{u}_L(\mb{x}),\mb{x})\leq\bar{f}_{\mathrm{lyp}},\\
\quad & g_i(\tilde{\mb{w}}(\mb{x}), \mu(\mb{x}), \mb{x}) \leq 0
\end{aligned},
\end{equation}
where $f$ is the objective function, {$f_{\mathrm{lyp}}$ is the bifurcation stability constraint and, $g_i$ are additional inequality constraints}.
For example, {$f$ could be a design cost metric and $g_i$ could be constraints on the upper and lower bounds of the design variables or bifurcation parameter}.
The evaluation of $f_{\mathrm{lyp}}$ implicitly enforces Eq.~\ref{eq:stability} and $\bflyp<0$ is a user-specified upper bound of the first Lyapunov coefficient.
By enforcing Eq.~\ref{eq:stability}, we ensure that the solution tracks the bifurcation point.

This problem formulation can represent various aerospace applications, {such as minimizing the design cost while enforcing an additional constraint to ensure the onset flutter velocity is above a certain threshold}, where the bifurcation parameter corresponds to the flutter speed.
{In this case, we have $g_i = \mu(\mb{x})$}. 
{Different from an alternative formulation that fix flight condition (including $\mu$) and enforce the stability by enforcing $\text{max}_i\text{Re}(\lambda_i)<0$, our formulation traces the onset of bifurcation. 
It means we will always have eigenvalues that on the imaginary axis, $\text{Re}(\lambda)=0$.}

\section{Numerical results}
\label{sec:example}

In this section, we verify and demonstrate the proposed constraint and its derivative computation.
{Three} optimization results are presented.
In Section~\ref{sec:algebraic_problem} we apply the proposed method to an algebraic Hopf bifurcation model, where we also present derivative verification.
In Section~\ref{sec:aeroelastic_problem} we solve a nonlinear aeroelastic design problem, {and in Section}~\ref{sec:CGL_example} {we solve a nonlinear PDE problem.}

\subsection{Algebraic model problem}
\label{sec:algebraic_problem}

\subsubsection{Problem definition}
We consider the following dynamical system,
\begin{equation}
\label{eq:algebraic_system}
\begin{aligned}
\dot{w}_1 = & (\mu - x_1) w_1 - w_2 + (2 x_1 x_2 - 1 ) w_1^3, \\
\dot{w}_2 = & w_1 + (\mu - x_2) w_2 + (2 x_2 - 1 ) w_2^3.
\end{aligned}
\end{equation}
where the dynamical system is parametrized by the design variable vector $\mb{x} = \left[x_1, x_2 \right]^\intercal$.
By varying the design vector, $\mb{x}$, the bifurcation solution can be subcritical or supercritical.
{This is a manufactured problem with its primary purpose being to verify derivatives and demonstrate the proposed method in an optimization.}

\subsubsection{Derivative verification}

First, we verify that the adjoint algorithm and AD formulae can be used to compute the derivative accurately.
We choose an arbitrary design point, $\mb{x} = \left[ 0.2, 0.7 \right]^\intercal$.
The total derivative computed using the adjoint method (Eq.~\ref{eq:seq_adjoint}) is shown in Table~\ref{tab:partial_l}.
We achieve a relative error of approximately $10^{-5}$ compared to the finite difference method, verifying that the adjoint formula computes the derivative accurately.
{We use the finite difference method to verify derivatives, rather than the more accurate complex step method}~\cite[Sec.~6.5]{Martins2022} {because the Newton solvers used in the equilibrium point solution as well as the left and right EVPs are not complex step safe.}

We also compute the partial derivatives of the first Lyapunov coefficient with respect to the left and right EVP solution vectors $\mb{u}_L, \mb{u}_R$ ($\omega$ and $\mu$ are entries from $\mb{u}_R$) calculated using Eq.~\ref{eq:stability}.
The final results are based on a hybrid application of analytically derived RAD formulae (see Eq.~\ref{eq:lyp}) and a generic RAD implementation using JAX~\cite{jax2018github}.
{More specifically, we use JAX to differentiate the lower level functions from Appendix} \ref{sec:partial_l} {($\mb{A}, \mb{b}, \mb{c}$) with respect to the multiple inputs using vector-Jacobian products, and we differentiate the upper level function using the analysis presented in Section}~\ref{sec:stability_der}. 
The results are listed in Table~\ref{tab:algebraic_example_mat}.
The finite difference and AD results differ by $10^{-7}$ to $10^{-11}$, which verifies the accuracy of the developed derivative formulae.
When the variable is a vector, we arbitrarily sampled one entry from the vector, and the sampled index is denoted by $[i]$.

\begin{table}[h]
\centering
\caption{Comparison of the total stability derivative computed using the adjoint method and finite differences.}
\begin{tabular}{crrr}
\toprule
$\mb{x}$ & Finite difference & Adjoint                  & Relative difference   \\
\midrule
$\f{\d l}{\d x_1}$    & $1.11520755$      & $1.115\underline{16669}$ & $3.66 \times 10^{-5}$ \\
$\f{\d l}{\d x_2}$    & $1.80103018$      & $1.80\underline{092820}$ & $5.66 \times 10^{-5}$ \\
\bottomrule
\end{tabular}
\label{tab:partial_l}
\end{table}

\begin{table}[h]
\centering
\caption{Comparison of the stability partial derivatives computed using AD and finite differences.}
\begin{tabular}{crrr}
\toprule
$\mb{x}$                    & Finite difference & AD                        & Relative difference         \\
\midrule
$\f{\p l}{\p \mb{x}[1]}$     & $1.10061847$      & $1.1006184\underline{8}$  & $4.03139078\times 10^{-9}$  \\
$\f{\p l}{\p \mb{u}_{L}[3]}$ & $-0.18662431$     & $-0.18662431$             & $5.61530267\times 10^{-11}$ \\
$\f{\p l}{\p \mb{u}_{R}[1]}$ & $-0.03551026$     & $-0.0355\underline{0959}$ & $6.65218077\times 10^{-7}$  \\
$\f{\p l}{\p \omega}$       & $0.25320698$      & $0.253206\underline{73}$  & $2.53628802\times 10^{-7}$  \\
$\f{\p l}{\p \mu}$          & $-0.02632296$     & $-0.02632296$             & $1.84866444\times 10^{-9}$  \\
\bottomrule
\end{tabular}
\label{tab:algebraic_example_mat}
\end{table}

\subsubsection{Optimization}

Having verified the derivatives, we can now perform gradient-based optimization.
The optimization problem is as defined in Eq.~\ref{eq:general_optimization_statement} with no additional constraints.
The objective of the optimization is to maximize the bifurcation parameter, which is equivalent to minimizing $f=-\mu$.
{In engineering design applications, it may be desired to minimize a design metric while constraining the lower bound of the bifurcation parameter. For example minimizing the fuel burn in an aircraft while enforcing the minimum flutter onset velocity via a constraint.
In this example, we simply maximize the bifurcation parameter while ensuring its stability with no design metric considerations.}
We use IPOPT~\cite{Wachter2006a}, an interior point optimizer, using the Python wrapper provided by pyOptSparse~\cite{Wu2020a} to solve this optimization problem.

Design space contours of both the first Lyapunov coefficient and the function of interest, together with the path taken by the optimizer, are shown in Fig.~\ref{fig:swept}.
\begin{figure}[ht]
\centering
\includegraphics[angle=0,width=0.99\linewidth]{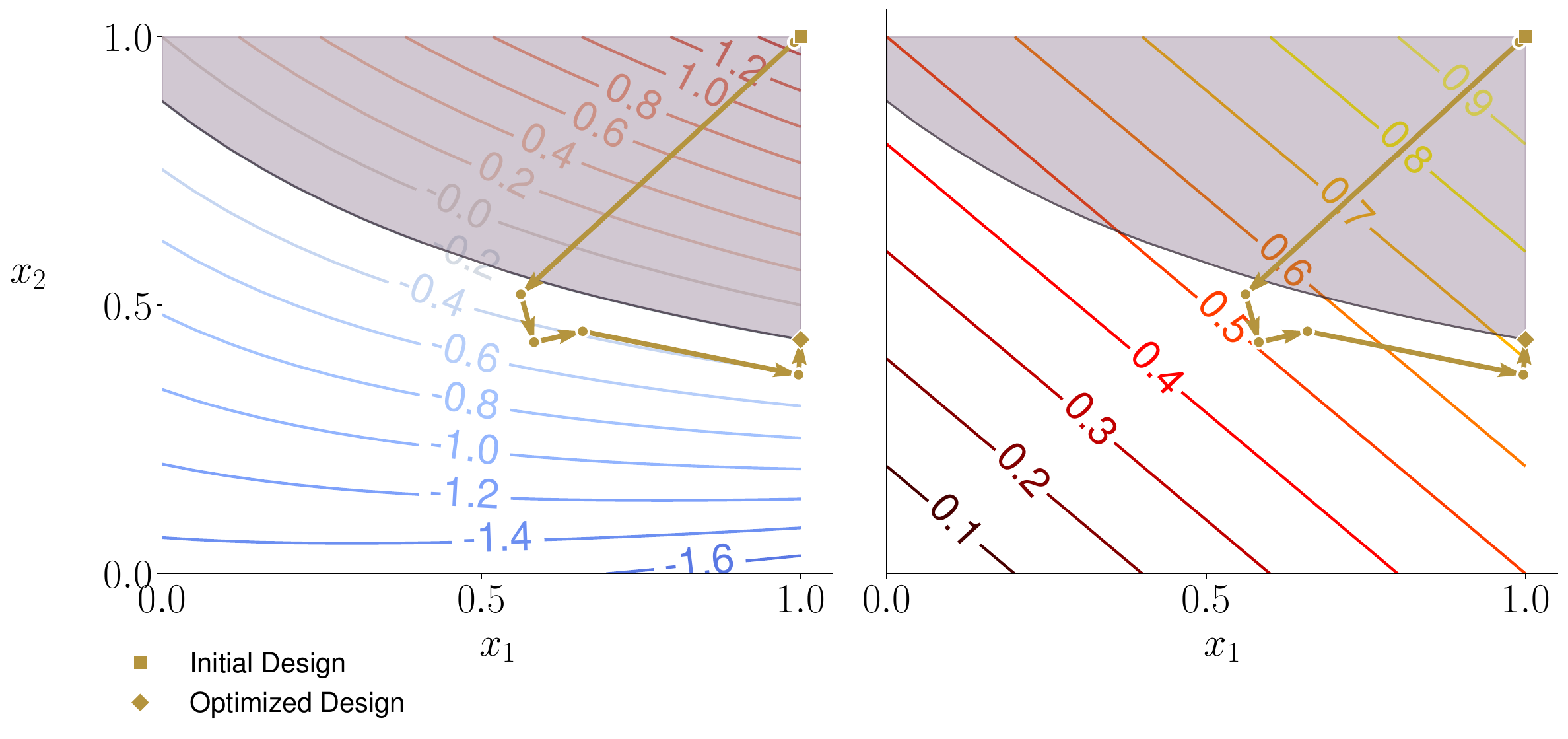}
\caption{Contours of the stability metric, $f_{\text{Lyp}}$ (left), and the bifurcation parameter, $\mu$, (right).
The brown arrows indicate the path of the optimization.}
\label{fig:swept}
\end{figure}
In this case, we set the upper bound for the first Lyapunov coefficient to $\bflyp=-0.2$.
Starting from initial design at $\mb{x}=\left[1, 1\right]$, the optimized solution $\mb{x}^* = \left[1, 0.4359\right]$ is found in seven iterations.
The LCO responses of all major iterations are shown in Fig.~\ref{fig:history}.
The initial and the optimized solution LCO curves and responses to small perturbations are plotted in Fig.~\ref{fig:base_opt}.
For the initial design, the LCO is subcritical, and small perturbations inside or outside the LCO cause the trajectory to either diverge or converge to the equilibrium point solution.
In contrast, the final solution is a supercritical LCO, and the perturbed solution converges to the stable LCO. 
Although the objective function decreases from about 1.0 to 0.73, the undesirable subcritical bifurcation disappears. 

\begin{figure}[ht]
\centering
\includegraphics[angle=0,width=0.7\linewidth]{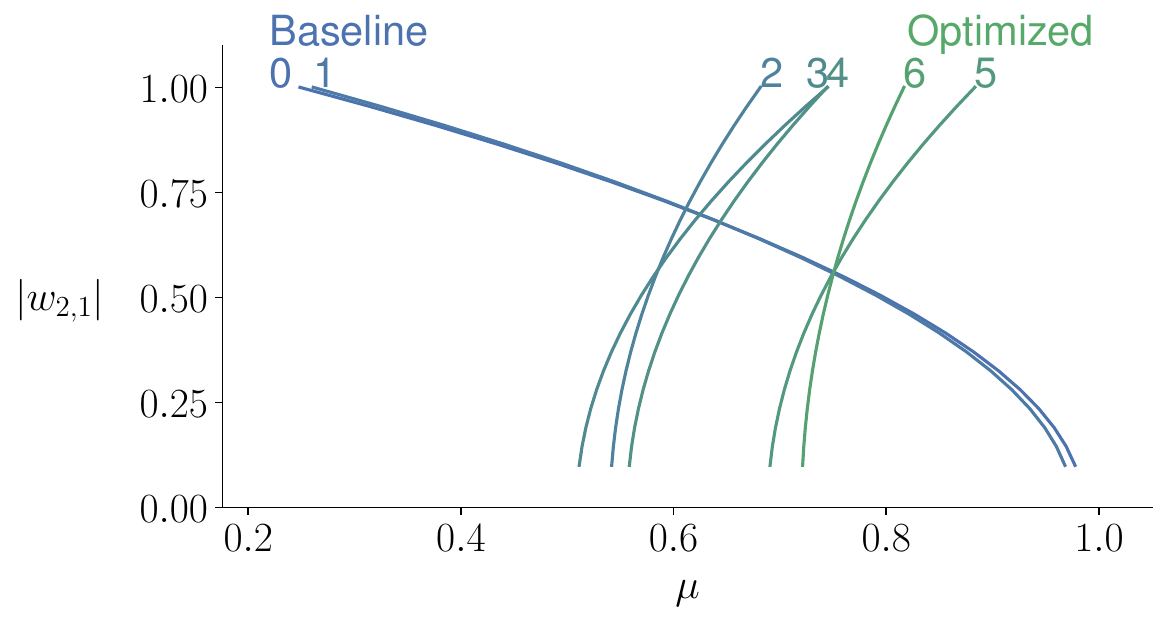}
\caption{Optimization history by major iteration showing response going from subcritical to supercritical. The color indicates the iteration index.}
\label{fig:history}
\end{figure}

\begin{figure}[ht!]
\centering
\includegraphics[angle=0,width=0.99\linewidth]{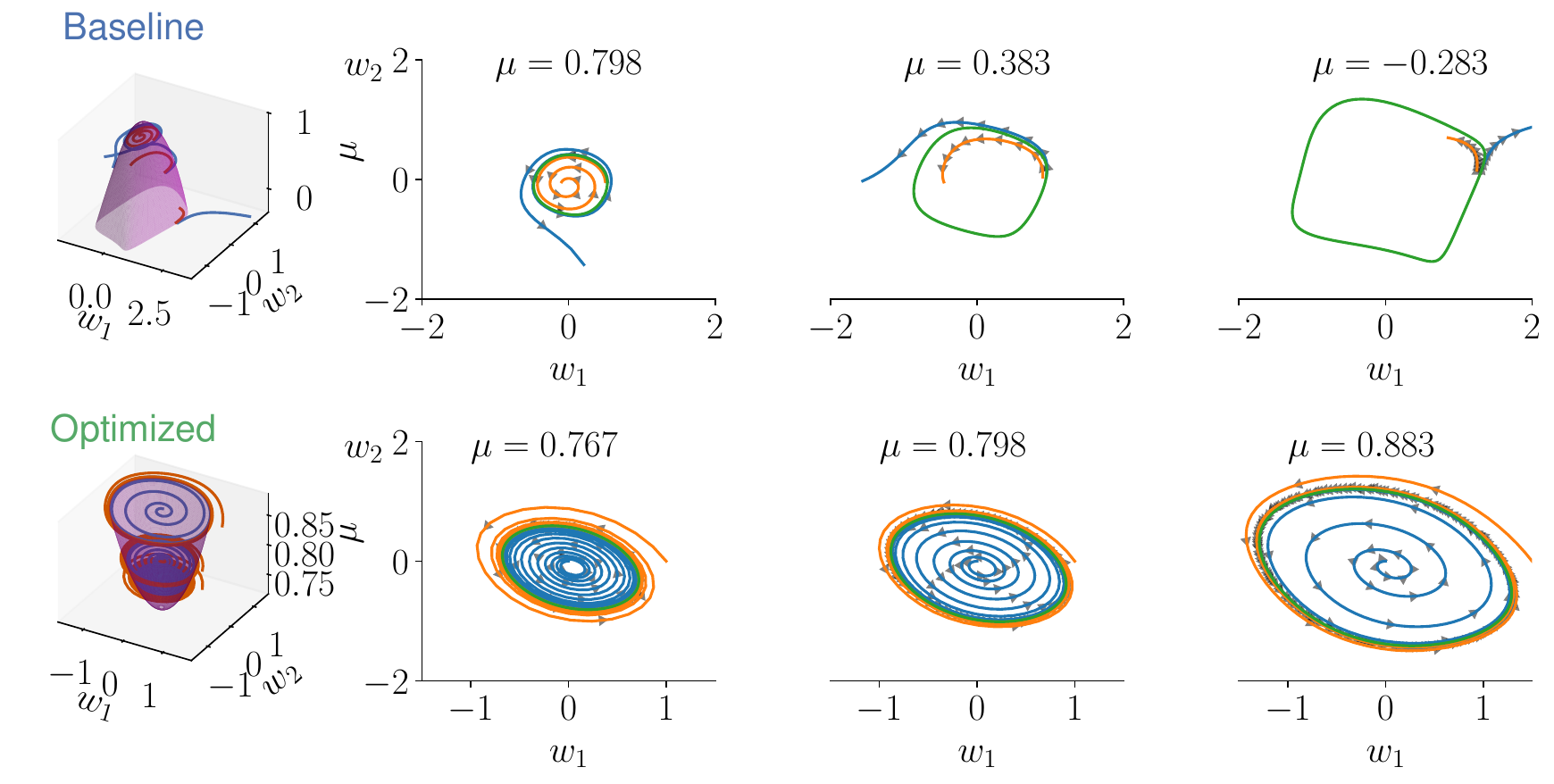}
\caption{Baseline (top) and optimized (bottom) response for the algebraic example (Eq.~\ref{eq:algebraic_system}). {The optimized design eliminates the subcritical bifurcation and corresponding LCOs seen in the baseline response.}}
\label{fig:base_opt}
\end{figure}

\subsection{Aeroelastic problem}
\label{sec:aeroelastic_problem}

\subsubsection{Problem definition}
In this section, we examine a nonlinear aeroelastic optimization problem using a model adapted from \citet{Riso2023a}.
The aerodynamic model represents a typical section within a quasi-steady incompressible potential flow, as illustrated in Fig.~\ref{fig:AirfoilDiagram}.
The model accounts for the effects of the effective angle of attack, apparent mass, and inertia, while neglecting wake effects.
The airfoil is constrained in both translation and rotation by springs, with nonlinear torsional stiffness.
\begin{figure}[htb!]
\centering
\includegraphics[width=0.6\linewidth]{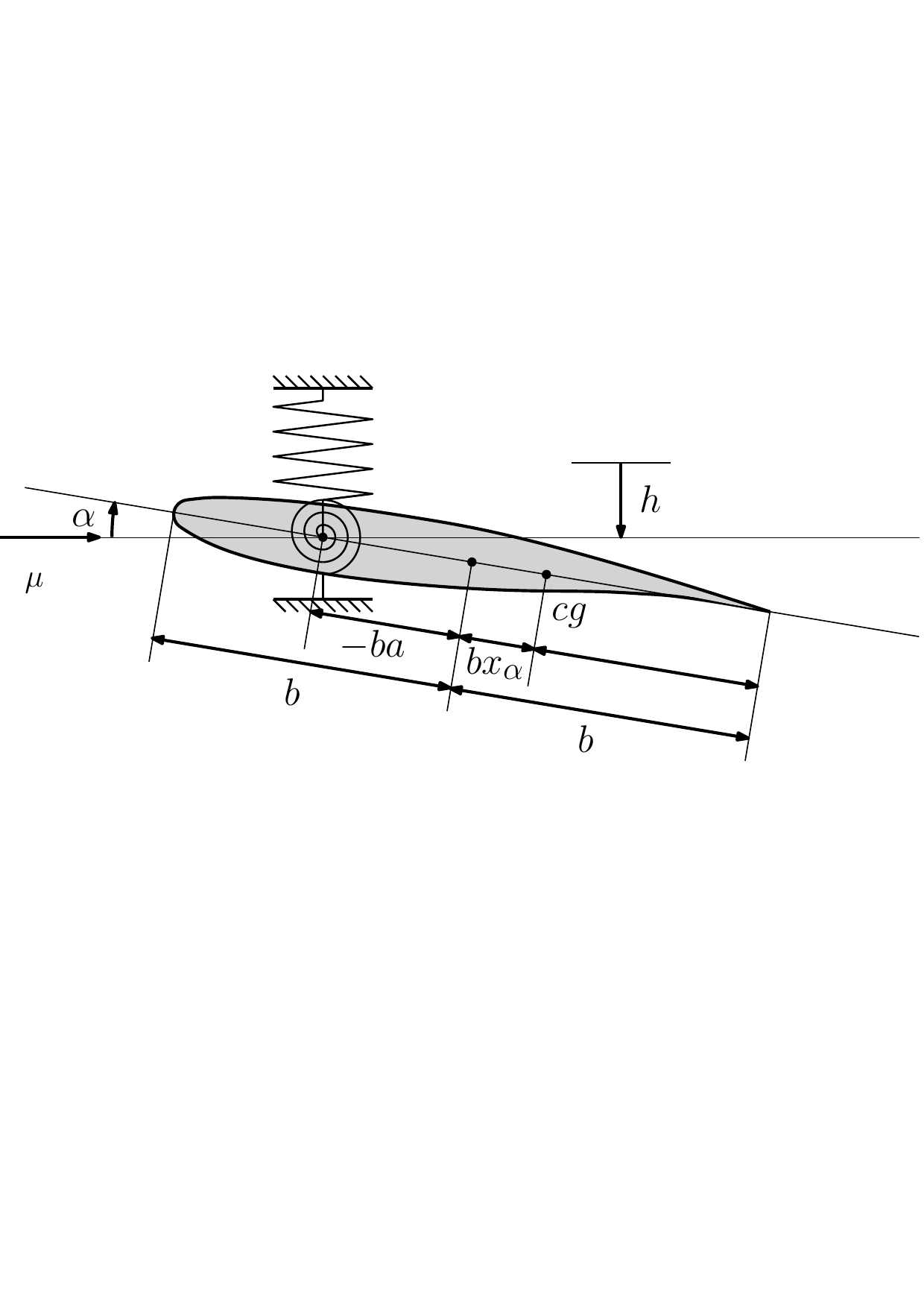}
\caption{Schematic of typical section.}
\label{fig:AirfoilDiagram}
\end{figure}

The dynamical system is represented as follows:
\begin{equation}
\label{eq:algebraic}
\dot{\mb{w}} = \mb{f}(\mb{w}, \mb{x}) = \mb{A}(\mb{x})\mb{w} + \mb{F}_{nl}(\mb{w}, \mb{x}),
\end{equation}
where the dynamics, $\mb{f}(\mb{w}, \mb{x})$, is composed of a linear coefficient matrix, $\mb{A}(\mb{x})$, and a state-dependent forcing term, $\mb{F}_{nl}(\mb{w}, \mb{x})$.
The state variable vector, $\mb{w}$, is defined as
\begin{equation}
\mb{w} =
\begin{bmatrix}
\overbar{h},       &
\alpha,            &
\dot{\overbar{h}}, &
\dot{\alpha}
\end{bmatrix}^\intercal,
\end{equation}
where $\overbar{h}$ and $\alpha$ are the dimensionless plunging and pitching variables, respectively.
The design variable vector, $\mb{x}$, is defined as
\begin{equation}
\mb{x} =
\begin{bmatrix}
\overbar{m}, &
\kappa_\alpha^{(3)}
\end{bmatrix}^\intercal.
\end{equation}
Here, \( \overbar{m} \) is a dimensionless mass per unit length, defined as the ratio of the mass per unit length of the typical section to the mass of the fluid within a circle of radius equal to the half chord.
The parameter \( \kappa_\alpha^{(3)} \) is a dimensionless stiffness coefficient, representing the ratio between the third-order and first-order rotational spring stiffness constants.
In the optimization process, \( \overbar{m} \) is adjusted by varying the mass per unit length, while the fluid mass in the circular region with a half-chord radius remains constant.

The linear coefficient matrix is defined as
\begin{equation}
\mb{A}(\mb{x}) =
\begin{bmatrix}
\mb{0}_{2\times 2}  & \mb{I}_{2\times 2}   \\
-\mb{M}^{-1} \mb{K} & -\mb{M}^{-1}\mb{D}_A
\end{bmatrix}.
\end{equation}
The individual aerodynamic and structural mass, stiffness, and damping matrices are defined as
\begin{equation}
\mb{M}(\mb{x})
= \mb{M}_S + \mb{M}_A(\mb{x}), \quad
\mb{M}_S =
\begin{bmatrix}
1        & x_\alpha   \\
x_\alpha & r_\alpha^2
\end{bmatrix}, \quad
\mb{M}_A(\mb{x}) = \f{1}{\overbar{m}}
\begin{bmatrix}
1  & -a                          \\
-a & \left(\f{1}{8} + a^2\right)
\end{bmatrix}, \quad
\end{equation}
\begin{equation}
\mb{K}(\mb{x})
= \mb{K}_S + \mb{K}_A(\mb{x}), \quad
\mb{K}_S =
\begin{bmatrix}
\Omega^2 & 0          \\
0        & r_\alpha^2
\end{bmatrix},\quad
\mb{K}_A(\mb{x})
=\frac{2\mu^2}{\overbar{m}}
\begin{bmatrix}
0 & 1                              \\
0 & - \left(\frac{1}{2} + a\right)
\end{bmatrix},
\end{equation}
\begin{equation}
\mb{D}_A(\mb{x}) = \frac{2\mu}{\overbar{m}}
\begin{bmatrix}
1                          & 1-a                         \\
-\left(\f{1}{2} + a\right) & a \left(a - \f{1}{2}\right)
\end{bmatrix},
\end{equation}
where $a, x_\alpha$, $\Omega$, $r_\alpha$, and $\mu$ are non-dimensional parameters that are fixed in the optimization.
The nonlinear load is defined as
\begin{equation}
\mb{F}_{nl}(\mb{w}, \mb{x})
=
\begin{bmatrix}
0 \\
0 \\
-\mb{M}^{-1}\begin{bmatrix}
0 \\
r_\alpha^2\left(\kappa_\alpha^{(3)}\alpha^3 + \kappa_{\alpha}^{(5)}\alpha^5\right)
\end{bmatrix}
\end{bmatrix}
\end{equation}
where $\kappa_\alpha^{(5)}$ is a dimensionless stiffness coefficient, defined as the ratio of the fifth-order and the first-order spring rotational stiffness constants.
The values of all constants are listed in Table~\ref{tab:const_param}.
{The constants used are exactly the same as those used in the optimization problem solved by} \citet{Riso2023a}.
{At the equilibrium point, the nonlinear forcing term becomes a zero vector with $\mb{\tilde{w}} = \mb{0}$.
This greatly simplifies both the forward and backward pass dynamics.
However, we solve a PDE-governed problem with a non-zero equilibrium solution in Section}~\ref{sec:CGL_example}.

\begin{table}[!h]
\caption{Dimensionless parameters for the aeroelastic optimization problem.}
\begin{center}
\ra{1.2}
\begin{tabular}{@{}llr@{}}
\toprule
\textbf{Property}     & \textbf{Description}                                   & \textbf{Value} \\
\midrule
$a$                   & Offset of the elastic center from the half-chord point & $-0.3$         \\
$\Omega$              & Plunge-pitch structural frequency ratio                & $0.5$          \\
$r_\alpha$            & Radius of gyration                                     & $0.3$          \\
$x_\alpha$            & Static unbalance                                       & $0.2$          \\
$\kappa_\alpha^{(5)}$ & Fifth-order stiffness coefficient                      & $100.0$         \\
$\mu$                 & Speed (bifurcation parameter)                          & $0.8$          \\
\bottomrule
\end{tabular}
\end{center}
\label{tab:const_param}
\end{table}

\subsubsection{Optimization}
The aeroelastic optimization problem statement is summarized as follows,
\begin{equation}
\label{eq:optimization}
\begin{aligned}
\min \quad         & \overbar{m} - {\kappa_{\alpha}^{(3)}}^2                                                                     \\
\text{by varying} \quad & \underline{\mb{x}} \leq \begin{bmatrix}\overbar{m}\\ {\kappa_{\alpha}^{(3)}}\end{bmatrix} \leq \bar{\mb{x}} \\
\text{subject to} \quad  & \flyp \leq \bflyp                                                                                               \\
\quad              & \mu \geq\underline{\mu}                                                                                     \\
\end{aligned} .
\end{equation}
The objective function consists of the dimensionless mass and the cubic stiffness coefficient squared, which are also the design variables. 
The dimensionless mass is varied by changing the mass per unit length while keeping the mass of the fluid in a circle with the half chord as the radius fixed.
The stiffness coefficient squared is subtracted to penalize the nonlinearity of the design.

As before, we minimize the objective function, subject to the equilibrium point stability constraint.
The stability bound is set to $\bflyp = -0.1$
{to ensure that the bifurcation is supercritical.
We choose a number sufficiently below zero to avoid a marginally stable design solution.}
{We also constrain the speed, $\mu$, namely the bifurcation parameter, to be greater than} $\underline{\mu}$ to avoid low-speed flutter onset.
{We set the lower bound, $\underline{\mu}=1.01$ to be exactly equal to the bifurcation parameter of the baseline design in this study.
Therefore, the optimization problem reduces to identifying an optimal design that maximizes the objective function while ensuring that the flutter onset speed does not decrease and that the resulting bifurcation remain supercritical.}
No other constraints are enforced in this optimization.
Finally, the bounds on the design variables are
\begin{equation}
\underline{\mb{x}} =
\begin{bmatrix}
5 \\
-3
\end{bmatrix},
\quad
\bar{\mb{x}} =
\begin{bmatrix}
17 \\
1
\end{bmatrix}.
\end{equation}
These bounds are chosen large enough to allow the optimizer to stiffen or weaken the structure as needed, affecting nonlinearity, subject to the stability constraint.
The initial design variable values are arbitrarily chosen as
$
\mb{x} = \left[15.00, -3.00\right]^\intercal.
$

As before, we use IPOPT as the optimizer through the pyOptSparse~\cite{Wu2020a} wrapper for this study.
The contour plots of the objective function, the flutter speed constraint, and the bifurcation stability constraint, together with the path taken by the optimizer, are shown in  Fig.~\ref{fig:swept_ae}.
In {four} iterations, the optimized solution is found to be
$
\mb{x}^* = \left[15.00, 1.00\right]^\intercal,
$
where the flutter speed constraint is active, and the bifurcation stability constraint is satisfied but not active.
\begin{figure}[ht]
\centering
\includegraphics[angle=0,width=0.99\linewidth]{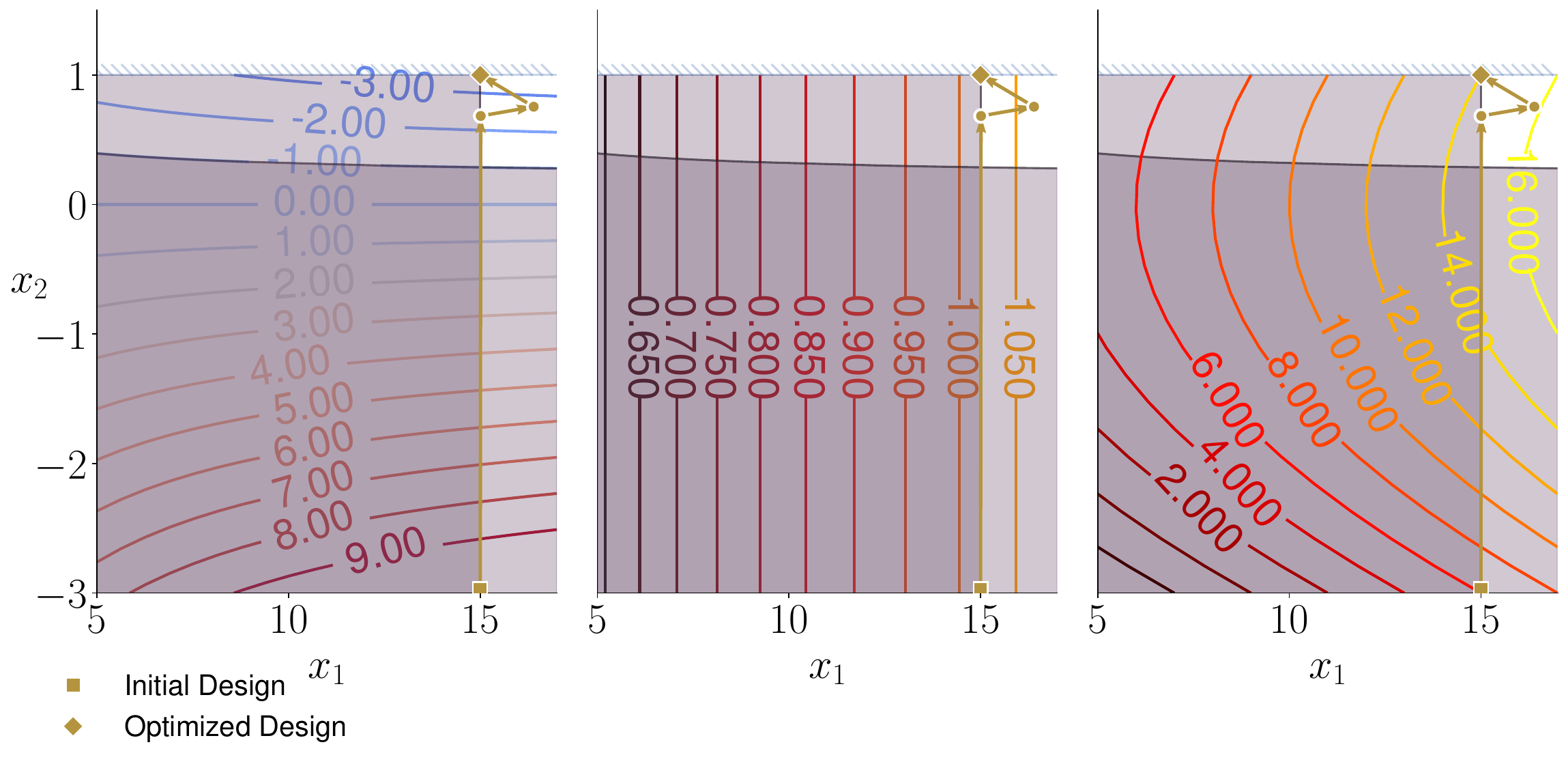}
\caption{Contours of the stability metric, $f_{\text{Lyp}}$ (left), the bifurcation parameter, $\mu$, (middle), and the objective function (right).
The brown arrows indicate the path of the optimization.}
\label{fig:swept_ae}
\end{figure}
This is in contrast to the initial design, $\mb{x}^{(0)}$, where the flutter speed constraint is active, and the bifurcation stability constraint is violated.
The distinction between the baseline and optimized designs becomes more evident by examining the LCO curves for all major iterations shown in Fig.~\ref{fig:history_ae}.
The baseline design exhibits subcritical LCO behavior near the equilibrium solution, whereas the optimized design demonstrates supercritical LCO behavior.

\begin{figure}[ht!]
\centering
\includegraphics[angle=0,width=0.7\linewidth]{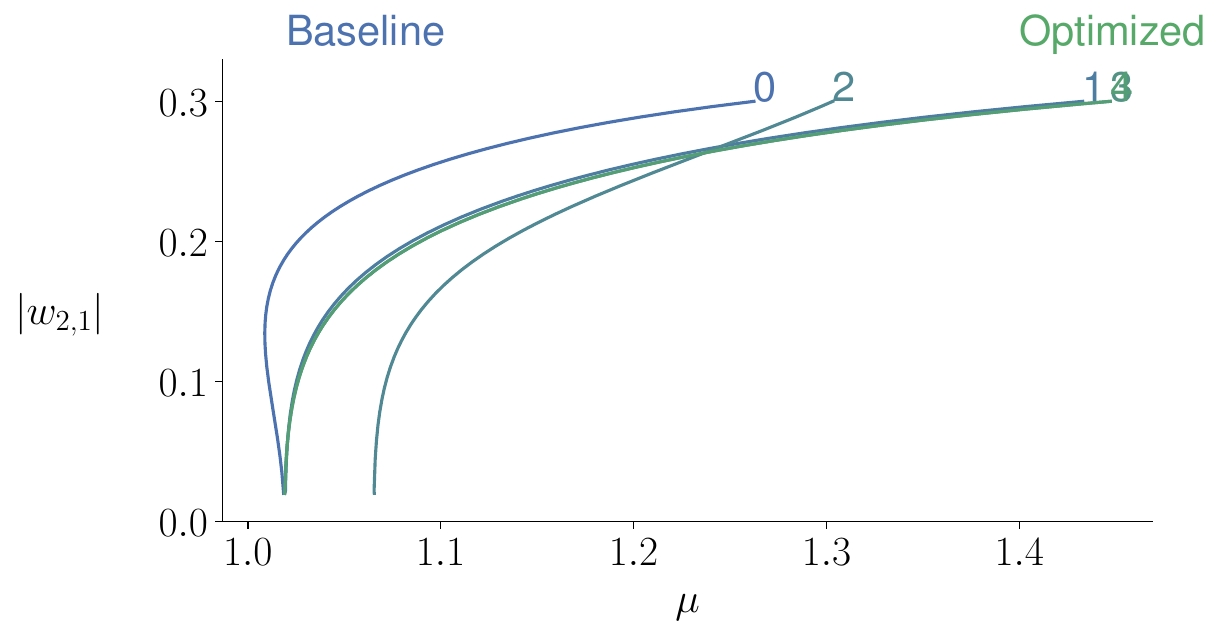}
\caption{Optimization history of iterations showing the LCO curve going from subcritical to supercritical.}
\label{fig:history_ae}
\end{figure}

Taking several prescribed motion magnitudes on the baseline and the optimized solution and adding a small perturbation, we obtain the time-accurate simulation of the trajectories shown in Fig.~\ref{fig:base_opt_ae}. 
{When simulated with a lower flutter speed value}, the baseline design has {subcritical Hopf-bifurcation: the inner LCO is unstable, while the outer LCO is stable.
Starting from a solution in the vicinity of the inner unstable LCO, the solution either converges to the equilibrium point or the external stable LCO.
As we simulate the system with higher flutter speed values, shown from left to right plots, the two LCOs merge into a single stable LCO.}
{For the optimized design, all perturbed solutions converge to a single supercritical LCO, indicating that the LCO is stable.}

To summarize, the optimization successfully {finds the optimal design point that does not include any subcritical bifurcation while also ensuring that the flutter speed does not decrease from the baseline design.}
Starting from a design {that exhibits unstable subcritical LCOs, we discovered an optimal design with better performance.}

\begin{figure}[ht]
\centering
\includegraphics[angle=0,width=0.99\linewidth]{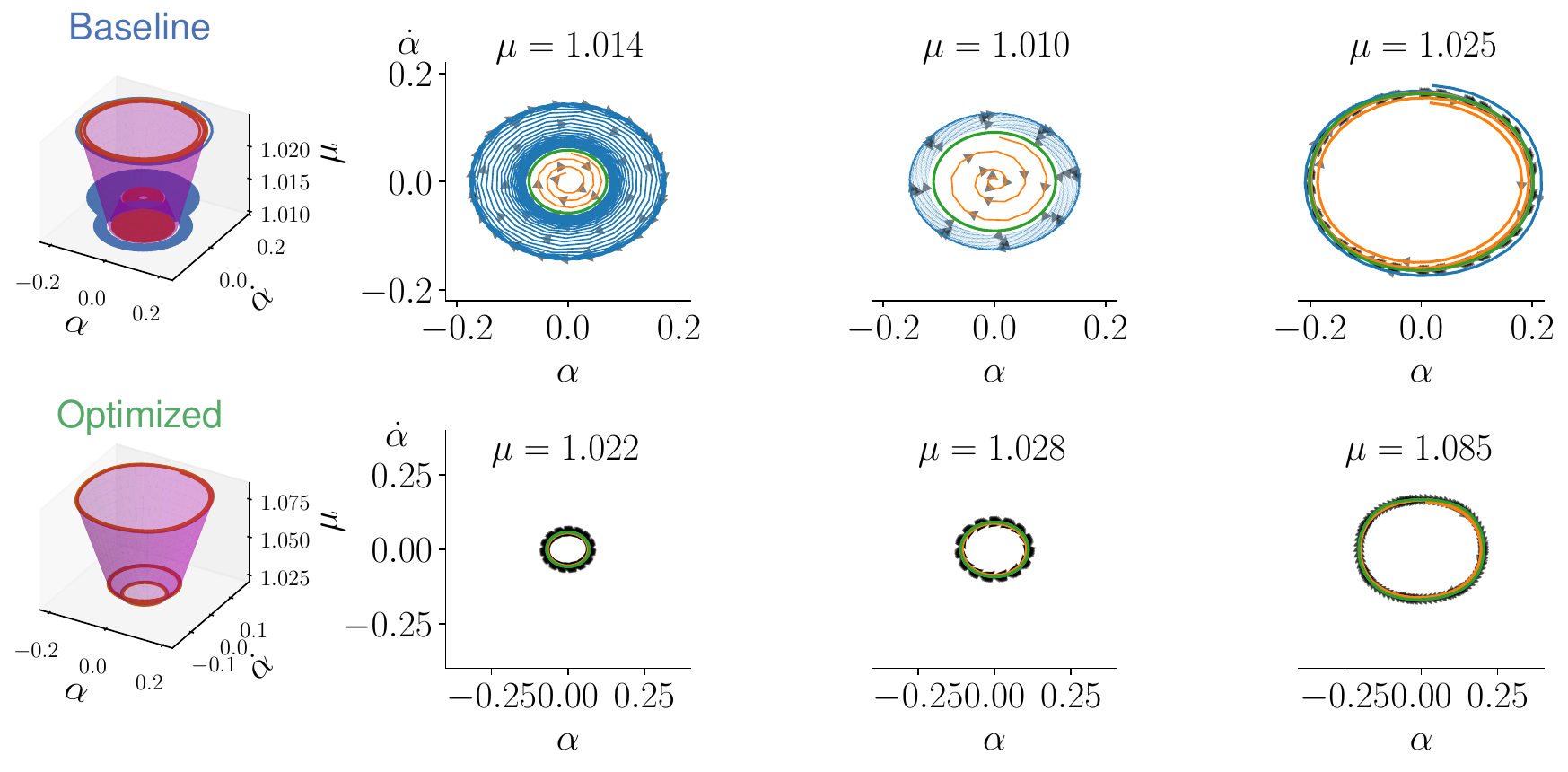}
\caption{Baseline (top) and optimized (bottom) response for the aeroelastic example (Eq.~\ref{eq:optimization}). {The baseline design includes subcritical bifurcation, while the optimized design has only stable supercritical bifurcation.}}
\label{fig:base_opt_ae}
\end{figure}

\subsection{{Complex Ginzburg--Landau Problem}}
\label{sec:CGL_example}

\subsubsection{Problem Formation}

{To demonstrate the ability of the proposed method to solve large dimensional nonlinear problems with moving steady state solutions, we consider the complex Ginzburg--Landau equation.
This equation is one of the most well studied PDEs in physics and describes the behavior of many different pattern forming phenomena such as superconductivity, Rayleigh--Bernard convection, and reaction-diffusion systems.
A detailed overview of the dynamics and bifurcation structure of the equation is given by} \citet{Aranson2002}.
{One form of the governing equation is}
\begin{equation}
\f{\partial{w}}{\partial t} = \Delta{w} + (\mu+j\nu){w} - (c_3(\xi)+j\sigma)|{w}|^2{w} - c_5(\xi)|{w}|^4{w} + f(\xi),
\end{equation}
{with homogeneous Neumann boundary conditions.
Here, $\xi$ is the coordinate, ${w}$ is the state variable, $c_3(\xi)$ and $c_5(\xi)$ are field parameters (later we treat $c_3$ as a design variable and $c_5$ is a fixed parameter), $\mu$ is the bifurcation parameter, the Laplacian operation is denoted by $\Delta$, and $\nu$ and $\sigma$ are constants.
A nonlinear source term $f(\xi)$ is added to the system.
We consider the 1-D domain $-\pi \leq \xi \leq \pi$ with the dimension of the system $n=32$.
The equation can be split into its real and imaginary parts to make a two-component system,}
\begin{equation}
\f{\partial}{\partial t}\begin{bmatrix}{w}_1\\ {w}_2\end{bmatrix}
=
\begin{bmatrix}
\Delta+\mu & -\nu \\
\nu & \Delta +\mu
\end{bmatrix}
\begin{bmatrix}
{w}_1 \\
{w}_2
\end{bmatrix}
- ({w}_1^2 + {w}_2^2)
\begin{bmatrix}
c_3{w_1} - \sigma{w}_2\\
\sigma {w}_1 + c_3{w}_2
\end{bmatrix}
- c_5({w}_1^2 + {w}_2^2)^2
\begin{bmatrix}
{w}_1\\
{w}_2
\end{bmatrix}
+
\begin{bmatrix}
f_{\text{real}}(\xi)\\
f_{\text{im}}(\xi)
\end{bmatrix}
,
\end{equation}
{where ${w}_1$ represents the real part of the state variable and ${w}_2$ represents the imaginary part.
The equation is solved using a finite difference approach to calculate the Laplacian of the state variable.

The design variable for this problem is $c_3(\xi)$ and all other variables are defined as}
\begin{equation}
\begin{aligned}
c_5(\xi) &= \text{tanh}(\xi),\\
f_{\text{real}}(\xi) &= \f{1}{2}e^{\f{-\xi^2}{2}},\\
f_{\text{im}} &= 0,\\
\nu &= 1,\\
\sigma &= 0.1,
\end{aligned}
\end{equation}
{and are fixed throughout the optimization.
The nonlinear source term is Gaussian and applied only to the real part of the system.}

\subsubsection{Optimization}

{The objective function considered for this optimization is defined as}
\begin{equation}
f = f_{\text{min}} + f_{\text{smooth}} + f_{\text{var}}, 
\end{equation}
where  
\begin{equation}
\begin{aligned}
f_{\text{min}} =& \int c_3(\xi)\d\xi, \\ 
f_{\text{smooth}} =& \int \left(\f{d c_3}{d\xi}(\xi)\right)^2\d\xi, \\ 
f_{\text{var}} =&
- \left(\text{Var}(c_3(\xi)\right).
\end{aligned}
\end{equation}
{Here, $\text{Var}(c_3(\xi))$ is the variance of $c_3$ over $\xi$.
This objective function consists of three parts: $f_{\text{min}}$ minimizes the values of $c_3$ in the domain, $f_{\text{smooth}}$ promotes a smooth solution of the design variables, and $f_{\text{var}}$ promotes variance of the design variables in the domain.
An interpretation is to let $c_3(\xi)$ denote a spatially varying model-correction or actuation field along a surface.
In field inversion and machine learning (FIML), $c_3$ can be a local correction coefficient multiplying a turbulence-model term} \citep{Parish2016a,Duraisamy2019};
{the objective balances small corrections on average, smoothness to avoid spurious oscillations, and sufficient variance to capture genuine inhomogeneities.
Other links to instabilities in oscillatory and inhomogeneous media can be drawn from the Kuramoto framework }\citep{Kuramoto1984,Acebrn2005}.

{The optimization problem statement is summarized as follows}
\begin{equation}
\label{eq:optimization_CGL}
\begin{aligned}
\min \quad & f_{\text{min}}+f_{\text{smooth}} + f_{\text{var}}\\
\text{by varying} \quad & \underline{\mb{x}} \leq c_3(\xi)\leq \bar{\mb{x}} \\
\text{subject to} \quad  & \flyp \leq \bflyp\\
\end{aligned},
\end{equation}
{where the design variables $\mb{x} = c_3(\xi)$ have dimension of $n$.
The upper and lower bounds for the design variables are set to be constant over the domain $\xi$},
\begin{equation}
\underline{\mb{x}} =
0\quad
\overbar{\mb{x}} =
10,
\end{equation}
{and the stability constraint as $\bflyp=-0.5$ to ensure no subcritical bifurcations are present in the solution.
The initial design is given by an arbitrary selected function}
\begin{equation}
\begin{aligned}
c_3^{(0)} &= -\text{tan}(\xi).\\
\end{aligned}
\end{equation}
{The optimization problem can then be interpreted as finding the minimum concentration of a catalyst $c_3$ which ensures no subcritical bifurcation occur in the system. The smoothness term in the objective function ensures a physically realizable design and the upper and lower bounds on the design variable represent physical limitations in the system.

One challenge when considering problems governed by PDE equations is that the higher order derivatives given in Eq.}~\ref{eq:bc}{ are not available analytically and numerical approaches must be used.
This can be computationally expensive; however, we use the approach outlined by} \citet{Beran1999} {to calculate the directional derivatives as}
\begin{equation}
\begin{aligned}
\mb{b}(\mb{y}_1,\mb{y}_2,\tilde{\mb{w}},\mu,\mb{x}) &= \f{1}{2\epsilon_b}\left(\mb{A}(\tilde{\mb{w}} + \epsilon_b\mb{y}_1, \mu, \mb{x}) - \mb{A}(\tilde{\mb{w}} - \epsilon_b\mb{y}_1, \mu, \mb{x})\right)\mb{y_2}.
\end{aligned}
\end{equation}
{We use the corresponding second-order accurate finite-difference formula to calculate the vector $\mb{c}$.
The finite difference perturbations $\epsilon_b=10^{-4}$ and corresponding $\epsilon_c=0.01$ are arrived at empirically.
This is significantly less expensive than traditional finite-differences to calculate the Hessian and third order tensor as it only requires six total calls to the Jacobian matrix to calculate both terms.

Once again using IPOPT through pyOptSparse}~\cite{Wu2020a}{, the optimal solution is found in 76 iterations.
The first nine iterations as well as the final design are shown in Fig.}~\ref{fig:opt_cgl}{.
As can be seen, the baseline design included subcritical bifurcations at $\mu = 0.22$ and the optimized design only contains supercritical bifurcations at $\mu=0.46$.}
{The solution history of the design cost, design variable, and equilibrium solution is shown in Fig.}~\ref{fig:opthist_cgl}.
{As can be seen, the optimal design variable solution is smooth over the domain as promoted by $f_{\text{smooth}}$.
The equilibrium solution is also dependent on the design variable and non-zero, as opposed to the previous example.}
{Figure}~\ref{fig:cgl_ex} {shows the time-accurate simulations of the perturbed trajectories for different prescribed motion magnitudes.
The baseline design, when starting from a solution close to the equilibrium solution, exhibits subcritical bifurcation, leading to the trajectory diverging to an unstable LCO.
In contrast, starting from the same points, the optimized design exhibits only supercritical bifurcations leading the system to converge to the stable LCO.

To summarize the optimization for the CGL example, given a sub-optimal initial design that is unstable, we simultaneously minimize the design cost of the system while ensuring that the resulting optimal design exhibits only supercritical bifurcations that lead to stable LCOs.}

\begin{figure}[ht]
\centering
\includegraphics[width=0.7\linewidth]{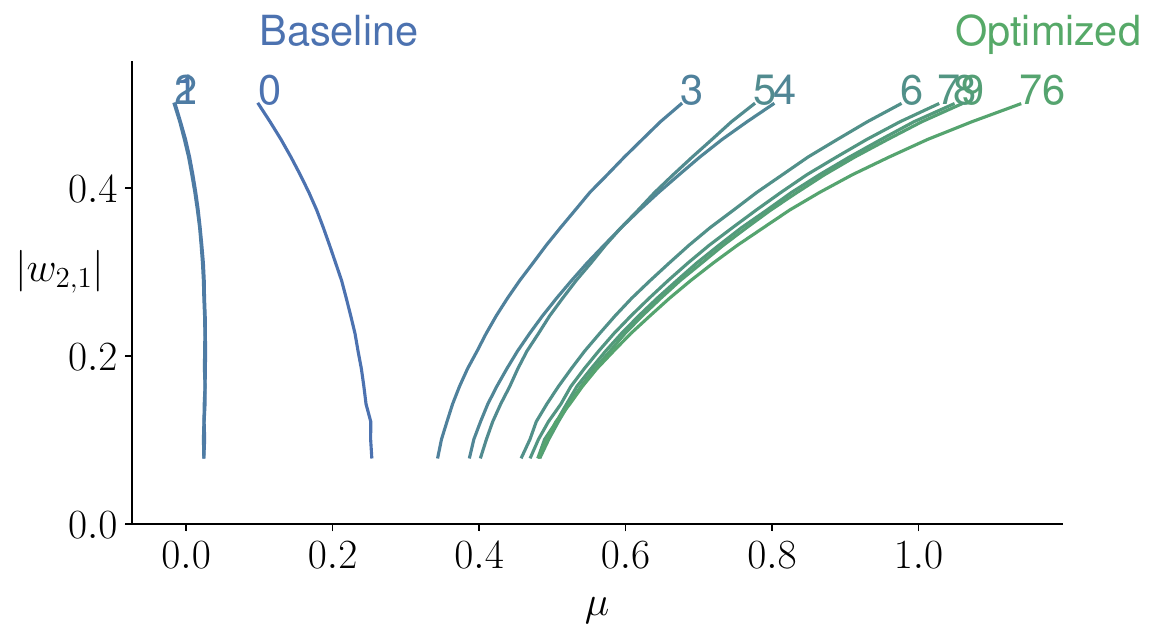}
\caption{Optimization history of the CGL system showing the LCO curve going from subcritical to supercritical.}
\label{fig:opt_cgl}
\end{figure}

\begin{figure}[ht]
\centering
\includegraphics[width=0.9\linewidth]{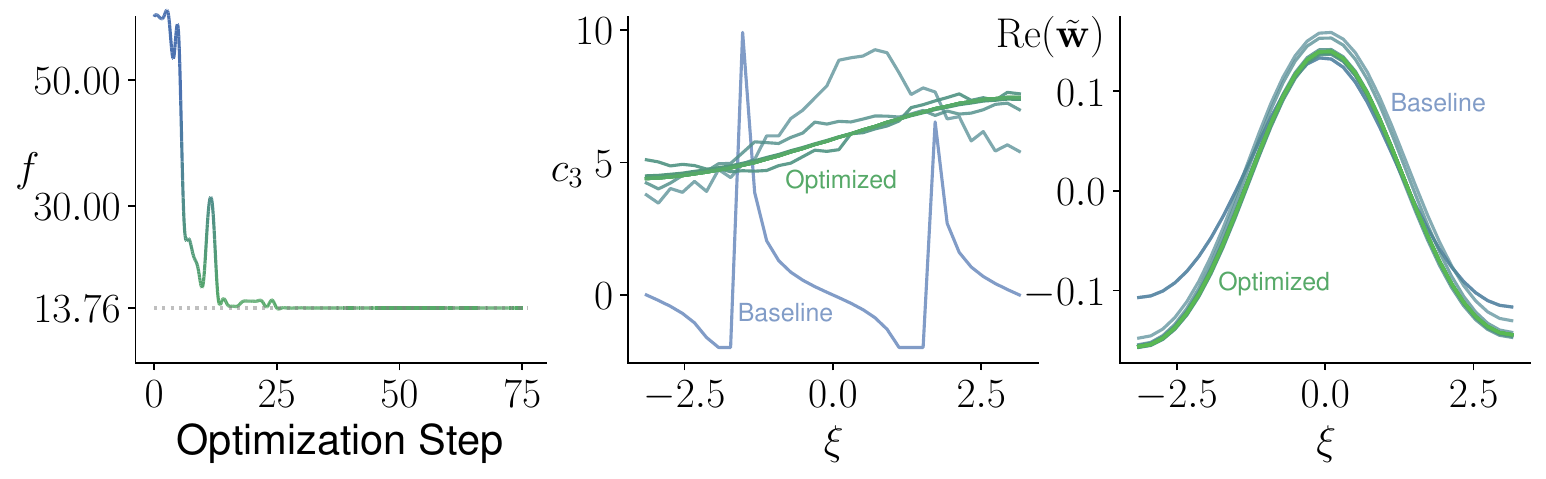}
\caption{Optimization history of the objective function (left), design variable (middle), and real part of the equilibrium solution (right).}
\label{fig:opthist_cgl}
\end{figure}

\begin{figure}[ht]
\centering
\includegraphics[width=0.99\linewidth]{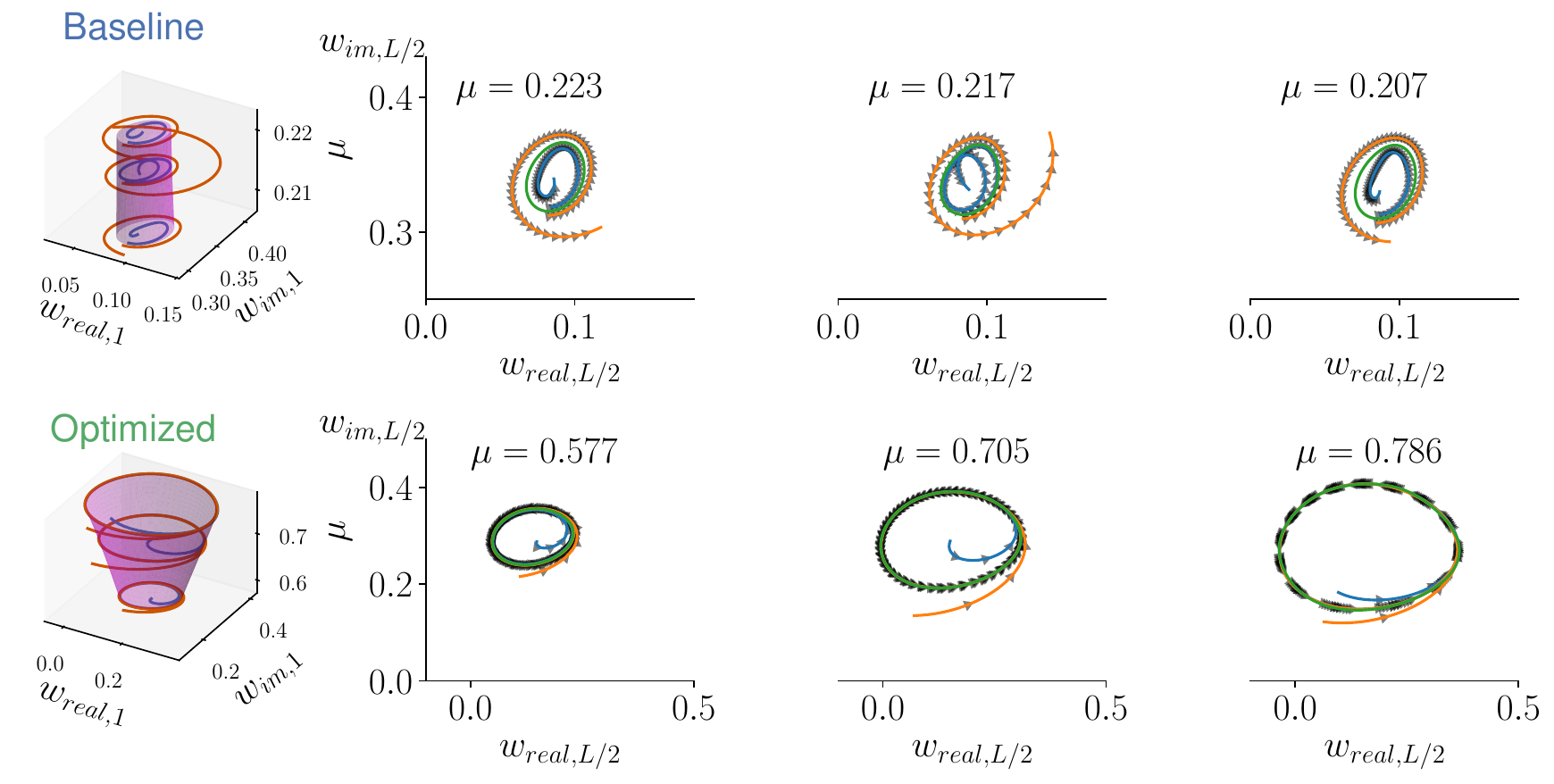}
\caption{Baseline (top) and optimized (bottom) response for the CGL example (Eq.~\ref{eq:optimization_CGL}). The baseline design includes unstable subcritical bifurcation, while the optimized design has only stable supercritical bifurcation and LCOs.}
\label{fig:cgl_ex}
\end{figure}

\section{Conclusion}
\label{sec:con}
This paper proposes the first Lyapunov coefficient-based method for Hopf-bifurcation stability analysis and optimization.
{The advantage of this method is that it rigorously accounts for the stability of bifurcations—quantifying whether the emerging limit cycle is stable or unstable—through the first Lyapunov coefficient.}
{In addition, the method enables efficient computation of the stability metric's derivatives, which are essential for gradient-based optimization.}
This is achieved by solving two large adjoint equations derived from the global coupled adjoint equation and two smaller adjoint equations obtained from the partial derivatives of the function of interest.
The process is independent of the number of design variables, making it suitable for design optimization with high-dimensional design spaces.
Another advantage of this approach is that it eliminates the need to solve LCO curves under multiple operating conditions.
Instead, only one point in the parameter space needs to be analyzed by solving coupled EVPs.
Further computational tests are required to quantify the exact cost savings.

{Two main limitations of this method have been identified: (1) The first Lyapunov coefficient is a local method for predicting bifurcation instability and cannot predict the stability of a large magnitude LCO.
(2) The proposed method cannot handle changing bifurcation instability mechanisms during the optimization. 
These limitations may result in designs which still exhibit unstable Hopf-bifurcations due to a large magnitude LCO or a changing bifurcation stability mode during the optimization.
This should be considered in future work to ensure robust design optimization.}

One challenge in deriving this algorithm lies in the complicated form taken by the first Lyapunov coefficient, which involves matrix inversion, higher-order partial derivative matrix--vector products, and complex analytic function operation.
We differentiate this function and derive its RAD form using the dot product test-based method for complex analytic functions.
We verified the proposed derivative formulation of the first Lyapunov coefficient, demonstrating good agreement of the total adjoint derivative and partial derivatives compared to a finite-difference reference, with a small relative difference around $10^{-5}$ and less for both. 

We demonstrate the effectiveness of the constraint and its derivative computation in a gradient-based design optimization.
We solve {three} optimization problems: an analytic problem, a nonlinear aeroelastic design problem {of a typical section model}, {and a problem governed by the complex Ginzburg--Landau PDE.}
In all cases, the proposed constraint suppresses the bifurcation present in the baseline design, finding an optimal solution in a few iterations.
The method has the potential to be used in aerodynamic and aerostructural optimization based on computational fluid dynamics, as well as other engineering applications governed by PDEs.

\bibliography{references,mdolab}

\appendix

\section{Eigenvector normalization}
\label{sec:normalization}
Here we show that sign of the first Lyapunov coefficient is invariant when the following eigenvector normalization condition in Eq.~\ref{eq:l_eig} is chosen (repeated here for convenience),
\begin{equation}
{\mb{q}_R}_{\mathrm{bif}}^* {\mb{q}_L}_{\mathrm{bif}} = 1.
\end{equation}
To demonstrate this, suppose with some arbitrary normalization condition for ${\mb{q}_R}_{\mathrm{bif}}$, we have
\begin{equation}
\label{eq:normalization}
{{\mb{q}_R}_{\mathrm{bif}}^{(0)}}^{*} {\mb{q}_L}_{\mathrm{bif}}^{(0)} = 1.
\end{equation}
where $\square^{(0)}$ denotes the original eigenvectors before stretching and rotation.
With some other normalization condition, we end up with a different ${\mb{q}_R}_{\mathrm{bif}}$, defined by
\begin{equation}
{\mb{q}_R}_{\mathrm{bif}} = \alpha e^{j\theta}{\mb{q}_R}_{\mathrm{bif}}^{(0)}.
\end{equation}
The eigenvector ${\mb{q}_L}_{\mathrm{bif}}$ needs to change accordingly,
\begin{equation}
{\mb{q}_L}_{\mathrm{bif}} = \beta e^{j\phi}{\mb{q}_L}_{\mathrm{bif}}^{(0)}.
\end{equation}
Using Eq.~\ref{eq:normalization}, we have 
\begin{equation}
\begin{aligned}
{\mb{q}_R}_{\mathrm{bif}}^* {\mb{q}_L}_{\mathrm{bif}}
= & \alpha\beta e^{j(\phi-\theta)}{{\mb{q}_R}_{\mathrm{bif}}^{(0)}}^{*}{\mb{q}_L}_{\mathrm{bif}}^{(0)} \\
= & \alpha\beta e^{j(\phi-\theta)}                                                                     \\
= & 1.                                                                                                 \\
\end{aligned}
\end{equation}
Now, by the fact that $\mb{b}(\cdot, \cdot)$ and $\mb{c}(\cdot, \cdot, \cdot)$ are linear with each of their entry, utilizing Eq.~\ref{eq:lyp} and Eq.~\ref{eq:bc} we have that
\begin{equation}
\begin{aligned}
h_1({\mb{q}_R}_{\mathrm{bif}}, {\mb{q}_L}_{\mathrm{bif}}) = & h_1({{\mb{q}_R}_{\mathrm{bif}}^{(0)}}, {\mb{q}_L}_{\mathrm{bif}}^{(0)})\alpha^3\beta e^{j(\theta - \phi)} = \alpha^2h_1({{\mb{q}_R}_{\mathrm{bif}}^{(0)}}, {\mb{q}_L}_{\mathrm{bif}}^{(0)}), \\
h_2({\mb{q}_R}_{\mathrm{bif}}, {\mb{q}_L}_{\mathrm{bif}}) = & h_2({{\mb{q}_R}_{\mathrm{bif}}^{(0)}}, {\mb{q}_L}_{\mathrm{bif}}^{(0)})\alpha^3\beta e^{j(\theta - \phi)} = \alpha^2h_2({{\mb{q}_R}_{\mathrm{bif}}^{(0)}}, {\mb{q}_L}_{\mathrm{bif}}^{(0)}), \\
h_3({\mb{q}_R}_{\mathrm{bif}}, {\mb{q}_L}_{\mathrm{bif}}) = & h_3({{\mb{q}_R}_{\mathrm{bif}}^{(0)}}, {\mb{q}_L}_{\mathrm{bif}}^{(0)})\alpha^3\beta e^{j(\theta - \phi)} = \alpha^2h_3({{\mb{q}_R}_{\mathrm{bif}}^{(0)}}, {\mb{q}_L}_{\mathrm{bif}}^{(0)}). \\
\end{aligned}
\end{equation}
Thus, we have
\begin{equation}
l({\mb{q}_R}_{\mathrm{bif}}, {\mb{q}_L}_{\mathrm{bif}}) = \alpha^2 l({\mb{q}_R}_{\mathrm{bif}}^{(0)}, {\mb{q}_L}_{\mathrm{bif}}^{(0)}).
\end{equation}
Since $\alpha\neq 0$, we have the sign of the first Lyapunov coefficient unchanged with arbitrary normalization condition for ${\mb{q}_R}_{\mathrm{bif}}$ as long as we normalize ${\mb{q}_L}_{\mathrm{bif}}$ using Eq.~\ref{eq:normalization}.

\section{Derivative of first Lyapunov coefficient, $f_{\text{Lyp}}$}
\label{sec:partial_l}
Here we develop the partial derivative of the first Lyapunov exponent, $f_{\text{Lyp}}$, with respect to $(\mb{x},  \omega_{\text{bif}}, \overtilde{\mb{w}}_{\text{bif}}, \mu_{\text{bif}}, \mb{q}_{R, \text{bif}}, \mb{q}_{L, \text{bif}})$.
To keep the notation clear we use the following notation $(\mb{x},  \omega, \overtilde{\mb{w}}, \mu, \mb{q}, \mb{p})$ to simplify it.
The derivatives require differentiation of the second-order residual derivative, $\mb{b}$, and the third-order residual derivative, $\mb{c}$, given in Eq.~\ref{eq:bc}, with respect to its multiple inputs.
For example, when we compute the derivative of $\mb{b}=\mb{b}_i(\mb{y}_1, \mb{y}_2, \overtilde{\mb{w}}, \mu, \mb{x})$ with respect to the second input, $\mb{y}_2$, evaluated at, $(\mb{y}_1, \mb{y}_2, \overtilde{\mb{w}}, \mu, \mb{x})=(\mb{s}_1, \mb{s}_2, \mb{s}_3, s_4, \mb{s}_5)$, it is written as
\begin{equation}
\f{\p \mb{b}}{\p \mb{y}_2}^*\Big\vert_{\mb{s}_1, \mb{s}_2, \mb{s}_3, s_4, \mb{s}_5}.
\end{equation}
For $\mb{c}$, we follow the same convention.

Following it, we present the partial derivative results
\begin{equation}
\begin{aligned}
\f{\p l}{\p \mb{x}} =        & \f{\p \mb{b}}{\p \mb{x}}^*\Big\vert_{\mb{q}, \mb{q}_1, \overtilde{\mb{w}}, \mu, \mb{x}} \overbar{\mb{b}}_1 + \f{\p \mb{b}}{\p \mb{x}}^*\Big\vert_{\mathrm{Conj}(\mb{q}), \mb{q}_2, \overtilde{\mb{w}}, \mu, \mb{x}} \overbar{\mb{b}}_2                                                                                                                                                                                                       \\
& + \f{\p \mb{b}}{\p \mb{x}}^*\Big\vert_{\mb{q}, \mathrm{Conj}(\mb{q}), \overtilde{\mb{w}}, \mu, \mb{x}} \overbar{\mb{b}}_3 + \f{\p \mb{b}}{\p \mb{x}}^*\Big\vert_{\mb{q}, \mb{q}, \overtilde{\mb{w}}, \mu, \mb{x}} \overbar{\mb{b}}_4                                                                                                                                                                                                         \\
& + \f{\p \mb{c}}{\p \mb{x}}^*\Big\vert_{\mb{q}, \mb{q}, \mathrm{Conj}(\mb{q}), \overtilde{\mb{w}}, \mu, \mb{x}} \overbar{\mb{c}} + \f{\p \mb{A}}{\p \mb{x}}^* \overbar{\mb{A}},                                                                                                                                                                                                                                                               \\
\f{\p l}{\p \tilde{\mb{w}}}= & \f{\p \mb{b}}{\p \tilde{\mb{w}}}^*\Big\vert_{\mb{q}, \mb{q}_1, \overtilde{\mb{w}}, \mu, \mb{x}} \overbar{\mb{b}}_1 + \f{\p \mb{b}}{\p \tilde{\mb{w}}}^*\Big\vert_{\mathrm{Conj}(\mb{q}), \mb{q}_2, \overtilde{\mb{w}}, \mu, \mb{x}} \overbar{\mb{b}}_2                                                                                                                                                                                       \\
& + \f{\p \mb{b}}{\p \tilde{\mb{w}}}^*\Big\vert_{\mb{q}, \mathrm{Conj}(\mb{q}), \overtilde{\mb{w}}, \mu, \mb{x}} \overbar{\mb{b}}_3 + \f{\p \mb{b}}{\p \tilde{\mb{w}}}^*\Big\vert_{\mb{q}, \mb{q}, \overtilde{\mb{w}}, \mu, \mb{x}} \overbar{\mb{b}}_4                                                                                                                                                                                         \\
& + \f{\p \mb{c}}{\p \tilde{\mb{w}}}^*\Big\vert_{\mb{q}, \mb{q}, \mathrm{Conj}(\mb{q}), \overtilde{\mb{w}}, \mu, \mb{x}} \overbar{\mb{c}} + \f{\p \mb{A}}{\p \tilde{\mb{w}}}^* \overbar{\mb{A}}                                                                                                                                                                                                                                                \\
\f{\p l}{\p \mb{q}} =        & \left(\f{\p \mb{b}}{\p \mb{y}_1}^*\Big\vert_{\mb{q}, \mb{q}_1, \overtilde{\mb{w}}, \mu, \mb{x}} \overbar{\mb{b}}_1\right) + \left(\mathrm{Conj}\left(\f{\p \mb{b}}{\p \mb{y}_1}^*\Big\vert_{\mathrm{Conj}(\mb{q}), \mb{q}_2, \overtilde{\mb{w}}, \mu, \mb{x}} \overbar{\mb{b}}_2\right)\right)                                                                                                                                               \\
& + \left(\f{\p \mb{b}}{\p \mb{y}_1}^*\Big\vert_{\mb{q}, \mathrm{Conj}(\mb{q}), \overtilde{\mb{w}}, \mu, \mb{x}} \overbar{\mb{b}}_3 + \mathrm{Conj}\left(\f{\p \mb{b}}{\p \mb{y}_1}^*\Big\vert_{\mb{q}, \mathrm{Conj}(\mb{q}), \overtilde{\mb{w}}, \mu, \mb{x}} \overbar{\mb{b}}_3\right)\right)                                                                                                                                               \\
& + \left(\f{\p \mb{b}}{\p \mb{y}_1}^*\Big\vert_{\mb{q}, \mb{q}, \overtilde{\mb{w}}, \mu, \mb{x}} \overbar{\mb{b}}_4 + \f{\p \mb{b}}{\p \mb{y}_1}^*\Big\vert_{\mb{q}, \mb{q}, \overtilde{\mb{w}}, \mu, \mb{x}} \overbar{\mb{b}}_4\right)                                                                                                                                                                                                       \\
& + \left(\f{\p \mb{c}}{\p \mb{y}_1}^*\Big\vert_{\mb{q}, \mb{q}, \mathrm{Conj}(\mb{q}), \overtilde{\mb{w}}, \mu, \mb{x}} \overbar{\mb{c}} + \f{\p \mb{c}}{\p \mb{y}_1}^*\Big\vert_{\mb{q}, \mb{q}, \mathrm{Conj}(\mb{q}), \overtilde{\mb{w}}, \mu, \mb{x}} \overbar{\mb{c}} + \mathrm{Conj}\left(\f{\p \mb{c}}{\p \mb{y}_1}^*\Big\vert_{\mb{q}, \mb{q}, \mathrm{Conj}(\mb{q}), \overtilde{\mb{w}}, \mu, \mb{x}} \overbar{\mb{c}}\right)\right) \\
\f{\p l}{\p \mb{p}} =        & \f{1}{2\omega}\left(\mb{c} - 2\mb{b}_1 + \mb{b}_2\right)                                                                                                                                                                                                                                                                                                                                                                                     \\
\f{\p l}{\p {\mu}} =         & \f{\p \mb{b}}{\p \mu}^*\Big\vert_{\mb{q}, \mb{q}_1, \overtilde{\mb{w}}, \mu, \mb{x}} \overbar{\mb{b}}_1 + \f{\p \mb{b}}{\p \mu}^*\Big\vert_{\mathrm{Conj}(\mb{q}), \mb{q}_2, \overtilde{\mb{w}}, \mu, \mb{x}} \overbar{\mb{b}}_2                                                                                                                                                                                                             \\
& + \f{\p \mb{b}}{\p \mu}^*\Big\vert_{\mb{q}, \mathrm{Conj}(\mb{q}), \overtilde{\mb{w}}, \mu, \mb{x}} \overbar{\mb{b}}_3 + \f{\p \mb{b}}{\p \mu}^*\Big\vert_{\mb{q}, \mb{q}, \overtilde{\mb{w}}, \mu, \mb{x}} \overbar{\mb{b}}_4                                                                                                                                                                                                               \\
& + \f{\p \mb{c}}{\p \mu}^*\Big\vert_{\mb{q}, \mb{q}, \mathrm{Conj}(\mb{q}), \overtilde{\mb{w}}, \mu, \mb{x}} \overbar{\mb{c}} + \f{\p \mb{A}}{\p \mu}^* \overbar{\mb{A}}                                                                                                                                                                                                                                                                      \\
\f{\p l}{\p {\omega}}  =     & -\f{1}{2\omega^2}\mathrm{Re}\left(\mb{p}^*\mb{c} - 2\mb{p}^* \mb{b}_1 + \mb{p}^* \mb{b}_2\right) + (2j)^*\left(\left(2j\omega\mb{I} - \mb{A}_2\right)^{-1}\mb{b}_4\right)^* \left(-\overbar{\mb{b}}_4\right)\\
\end{aligned},
\end{equation}
where
\begin{equation}
\begin{aligned}
\overbar{\mb{A}} =   & \left(-\left(\mb{A}\right)^{-*}\f{\p \mb{b}}{\p \mb{y}_2}^*\Big\vert_{\mb{q}, \mb{q}_1, \overtilde{\mb{w}}, \mu, \mb{x}} \overbar{\mb{b}}_1\right)\left(\mb{A}^{-1}\mb{b}_3\right)^* + \overbar{\mb{b}}_4\left(\left(2j\omega\mb{I} - \mb{A}\right)^{-1}\mb{b}_4\right)^* \\
\overbar{\mb{b}}_1 = & -\f{1}{\omega}\mb{p}                                                                                                                                                                                                                                                      \\
\overbar{\mb{b}}_2 = & \f{1}{2\omega}\mb{p}                                                                                                                                                                                                                                                      \\
\overbar{\mb{b}}_3 = & \mb{A}^{-*}\f{\p \mb{b}}{\p \mb{y}_2}^*\Big\vert_{\mb{q}, \mb{q}_1, \overtilde{\mb{w}}, \mu, \mb{x}} \overbar{\mb{b}}_1                                                                                                                                                   \\
\overbar{\mb{b}}_4 = & \left(2j\omega\mb{I} - \mb{A}\right)^{-*}\f{\p \mb{b}}{\p \mb{y}_2}^*\Big\vert_{\mathrm{Conj}(\mb{q}), \mb{q}_2, \overtilde{\mb{w}}, \mu, \mb{x}} \overbar{\mb{b}}_2                                                                                                      \\
\overbar{\mb{c}} =   & \f{1}{2\omega}\mb{p}                                                                                                                                                                                                                                                      \\
\end{aligned},
\end{equation}
and
\begin{equation}
\begin{aligned}
\mb{A} =   & \mb{A}(\overtilde{\mb{w}}, \mu, \mb{x})                                                                            \\
\mb{b}_1 = & \mb{b}(\mb{q}, \mb{A}^{-1}\mb{b}_3, \overtilde{\mb{w}}, \mu, \mb{x})                                               \\
\mb{b}_2 = & \mb{b}(\mathrm{Conj}(\mb{q}), \left(2j\omega\mb{I} - \mb{A}\right)^{-1}\mb{b}_4,  \overtilde{\mb{w}}, \mu, \mb{x}) \\
\mb{b}_3 = & \mb{b}(\mb{q}, \mathrm{Conj}(\mb{q}), \overtilde{\mb{w}}, \mu, \mb{x})                                             \\
\mb{b}_4 = & \mb{b}(\mb{q}, \mb{q}, \overtilde{\mb{w}}, \mu, \mb{x}),                                                           \\
\mb{c} =   & \mb{c}(\mb{q}, \mb{q}, \mathrm{Conj}(\mb{q}), \overtilde{\mb{w}}, \mu, \mb{x})                                     \\
\mb{q}_1 = & \mb{A}_1^{-1}\mb{b}_3                                                                                              \\
\mb{q}_2 = & \left(2j\omega_2\mb{I} - \mb{A}_2\right)^{-1}\mb{b}_4                                                              \\
\end{aligned}.
\end{equation}
It is worthwhile noting that although the derivative computation seems complex, the computational cost is close to the primal analysis.
All partial derivative RAD can be computed relatively at a low cost, and the more expensive computations are:
\begin{equation}
\begin{aligned}
& \mb{A}^{-*}\left(\f{\p \mb{b}}{\p \mb{y}_2}^*\Big\vert_{\mb{q}, \mb{q}_1, \overtilde{\mb{w}}, \mu, \mb{x}} \overbar{\mb{b}}_1\right)                                              \\
& \left(2j\omega\mb{I} - \mb{A}\right)^{-*}\left(\f{\p \mb{b}}{\p \mb{y}_2}^*\Big\vert_{\mathrm{Conj}(\mb{q}), \mb{q}_2, \overtilde{\mb{w}}, \mu, \mb{x}} \overbar{\mb{b}}_2\right) \\
\end{aligned},
\end{equation}
which are the adjoint equations of the primal analysis,
\begin{equation}
\label{sec:lyp_adjoint}
\begin{aligned}
& \mb{A}^{-1}\mb{b}_3                               \\
& \left(2j\omega\mb{I} - \mb{A}\right)^{-1}\mb{b}_4 \\
\end{aligned},
\end{equation}
respectively.
In addition, the dense matrix, $\overbar{\mb{A}}$, is written as the sum of two outer products.
The matrix is never be stored explicitly, instead, the two vectors are stored, saving memory.

\section{Adjoint equation for $\mb{r}_L$}
\label{sec:adjoint_L}
The first row of Eq.~\ref{eq:seq_adjoint} can be further expanded as
\begin{equation}
\begin{bmatrix}
\mb{A}_L^\intercal                                   & {\mb{G}_L}_{r}^\intercal                 & {\mb{G}_L}_{i}^\intercal                & \mb{O}                        & \mb{O}                         \\
\mb{O}                                               & \mb{A}_L                                 & {\omega_L}_{\mathrm{bif}}\mb{I}         & {\mb{q}_{R}}_{\mathrm{bif},r} & -{\mb{q}_{R}}_{\mathrm{bif},i} \\
\mb{O}                                               & - {\omega_L}_{\mathrm{bif}}\mb{I}        & \mb{A}_L                                & {\mb{q}_{R}}_{\mathrm{bif},i} & {\mb{q}_{R}}_{\mathrm{bif},r}  \\
\f{\p \mb{r}}{\p {{\mu}_L}_{\mathrm{bif}}}^\intercal & {\mb{g}_L}_{r}^\intercal                 & {\mb{g}_L}_{i}^\intercal                & \mb{O}                        & \mb{O}                         \\
\mb{O}                                               & -{\mb{q}_{L}}_{\mathrm{bif},i}^\intercal & {\mb{q}_{L}}_{\mathrm{bif},r}^\intercal & \mb{O}                        & \mb{O}
\end{bmatrix}
\pmb{\psi}_L
=
\begin{bmatrix}
\f{\p f}{\p {\overtilde{\mb{w}}_L}_{\mathrm{bif}}}^\intercal \\
\f{\p f}{\p {\mb{q}_L}_{\mathrm{bif}, r}}^\intercal          \\
\f{\p f}{\p {\mb{q}_L}_{\mathrm{bif}, i}}^\intercal          \\
\f{\p f}{\p {\mu_L}_{\mathrm{bif}}}^\intercal                \\
\f{\p f}{\p {\omega_L}_{\mathrm{bif}}}^\intercal             \\
\end{bmatrix},
\end{equation}
Here, ${\mb{G}_L}_{r}$ and ${\mb{G}_L}_i$ are defined by
\begin{equation}
\begin{aligned}
{\mb{G}_L}_{r} [i_1, i_2] = & \sum_{i_3}\f{\p^2 \mb{r}[i_3] {\mb{q}_{L}}_{\mathrm{bif},r}[i_3]}{\p {\overtilde{\mb{w}}_L}_{\mathrm{bif}}[i_1] \p {\overtilde{\mb{w}}_L}_{\mathrm{bif}}[i_2]}, \\
{\mb{G}_L}_{i} [i_1, i_2] = & \sum_{i_3}\f{\p^2 \mb{r}[i_3] {\mb{q}_{L}}_{\mathrm{bif},i}[i_3]}{\p {\overtilde{\mb{w}}_L}_{\mathrm{bif}}[i_1] \p {\overtilde{\mb{w}}_L}_{\mathrm{bif}}[i_2]}, \\
\end{aligned}
\end{equation}
where $[i_1, i_2]$ indicates getting the element at $i_1^{\mathrm{th}}$ row and $i_2^{\mathrm{th}}$ column, and ${\mb{g}_L}_{r}$ and ${\mb{g}_L}_{i}$ are defined by
\begin{equation}
\begin{aligned}
{\mb{g}_L}_{r}[i_1] = & \sum_{i_2}\f{\p^2 \mb{r}[i_2] {\mb{q}_{L}}_{\mathrm{bif},r}[i_2]}{\p {\overtilde{\mb{w}}_L}_{\mathrm{bif}}[i_1] \p {\mu_L}_{\mathrm{bif}}}, \\
{\mb{g}_L}_{i}[i_1] = & \sum_{i_2}\f{\p^2 \mb{r}[i_2] {\mb{q}_{L}}_{\mathrm{bif},i}[i_2]}{\p {\overtilde{\mb{w}}_L}_{\mathrm{bif}}[i_1] \p {\mu_L}_{\mathrm{bif}}}. \\
\end{aligned}
\end{equation}
Also, notice that
\begin{equation}
\mb{A}_L^\intercal = \mb{A}^\intercal({\mu_L}_{\mathrm{bif}}, {\overtilde{\mb{w}}_L}_{\mathrm{bif}}, \mb{x}),
\end{equation}
i.e., $\mb{A}_L$ is the Jacobian matrix.

\section{Adjoint equation for $\mb{r}_R$}
\label{sec:adjoint_R}
The second row of Eq.~\ref{eq:seq_adjoint} can be further expanded as
\begin{equation}
\begin{bmatrix}
\mb{A}_R^\intercal                                   & {\mb{G}_R}^\intercal_{r}                & {\mb{G}_R}^\intercal_{i}                 & \mb{O}                         & \mb{O}   \\
\mb{O}                                               & \mb{A}_R^\intercal                      & -{\omega}_{\mathrm{bif}}\mb{I}         & 2{\mb{q}_{R}}_{\mathrm{bif},r} & \mb{O}   \\
\mb{O}                                               & {\omega}_{\mathrm{bif}}\mb{I}         & \mb{A}_R^\intercal                       & 2{\mb{q}_{R}}_{\mathrm{bif},i} & \mb{e}_j \\
\f{\p \mb{r}}{\p {{\mu}_R}_{\mathrm{bif}}}^\intercal & {\mb{g}_R}_{r}^\intercal                & {\mb{g}_R}_{i}^\intercal                 & \mb{O}                         & \mb{O}   \\
\mb{O}                                               & {\mb{q}_{R}}_{\mathrm{bif},i}^\intercal & -{\mb{q}_{R}}_{\mathrm{bif},r}^\intercal & \mb{O}                         & \mb{O}
\end{bmatrix}
\pmb{\psi}_R
=
\begin{bmatrix}
\f{\p f}{\p {\overtilde{\mb{w}}}_{\mathrm{bif}}}^\intercal - \f{\p \mb{r}_L}{\p {\overtilde{\mb{w}}}_{\mathrm{bif}}}^\intercal \boldsymbol{\psi}_L \\
\f{\p f}{\p {\mb{q}_R}_{\mathrm{bif}, r}}^\intercal - \f{\p \mb{r}_L}{\p {\mb{q}_R}_{\mathrm{bif}, r}}^\intercal \boldsymbol{\psi}_L                   \\
\f{\p f}{\p {\mb{q}_R}_{\mathrm{bif}, i}}^\intercal - \f{\p \mb{r}_L}{\p {\mb{q}_R}_{\mathrm{bif}, i}}^\intercal \boldsymbol{\psi}_L                   \\
\f{\p f}{\p {\mu}_{\mathrm{bif}}}^\intercal - \f{\p \mb{r}_L}{\p {\mu}_{\mathrm{bif}}}^\intercal \boldsymbol{\psi}_L                               \\
\f{\p f}{\p {\omega}_{\mathrm{bif}}}^\intercal - \f{\p \mb{r}_L}{\p {\omega}_{\mathrm{bif}}}^\intercal \boldsymbol{\psi}_L                         \\
\end{bmatrix},
\end{equation}
where ${\mb{G}_{R}}_r$ and ${\mb{G}_{R}}_i$ are defined by
\begin{equation}
\begin{aligned}
{\mb{G}_{R}}_{r}[i_1, i_2] = & \sum_{i_3}\f{\p^2 \mb{r}[i_1] {\mb{q}_R}_{\mathrm{bif}, r}[i_3]}{\p {\overtilde{\mb{w}}}_{\mathrm{bif}}[i_3] \p {\overtilde{\mb{w}}}_{\mathrm{bif}}[i_2]}, \\
{\mb{G}_{R}}_{i}[i_1, i_2] = & \sum_{i_3}\f{\p^2 \mb{r}[i_1] {\mb{q}_R}_{\mathrm{bif}, i}[i_3]}{\p {\overtilde{\mb{w}}}_{\mathrm{bif}}[i_3] \p {\overtilde{\mb{w}}}_{\mathrm{bif}}[i_2]}, \\
\end{aligned}
\end{equation}
and ${\mb{g}_R}_{r}$ and ${\mb{g}_R}_{i}$ are defined by
\begin{equation}
\begin{aligned}
{\mb{g}_R}_{r}[i_1] = & \sum_{i_3}\f{\p^2 \mb{r}[i_1] {\mb{q}_{L}}_{\mathrm{bif},r}[i_3]}{\p {\overtilde{\mb{w}}_L}_{\mathrm{bif}[i_3]} \p {\mu}_{\mathrm{bif}}}, \\
{\mb{g}_R}_{i}[i_1] = & \sum_{i_2}\f{\p^2 \mb{r}[i_1] {\mb{q}_{L}}_{\mathrm{bif},i}[i_3]}{\p {\overtilde{\mb{w}}_L}_{\mathrm{bif}}[i_3] \p {\mu}_{\mathrm{bif}}}. \\
\end{aligned}
\end{equation}
Also, notice that
\begin{equation}
\mb{A}_R^\intercal = \mb{A}^\intercal({\mu}_{\mathrm{bif}}, {\overtilde{\mb{w}}}_{\mathrm{bif}}, \mb{x}).
\end{equation}
i.e., $\mb{A}_R$ is the Jacobian matrix.
In addition, for the contribution to the right hand side (RHS) from the adjoint of the previous equation, i.e., terms for ${\p \mb{r}_L}/{\p {\overtilde{\mb{w}}}_{\mathrm{bif}}}^\intercal \boldsymbol{\psi}_L, {\p \mb{r}_L}/{\p {\mb{q}_R}_{\mathrm{bif}, r}}^\intercal \boldsymbol{\psi}_L, {\p \mb{r}_L}/{\p {\mb{q}_R}_{\mathrm{bif}, i}}^\intercal \boldsymbol{\psi}_L,  {\p \mb{r}_L}/{\p {\mu}_{\mathrm{bif}}}^\intercal \boldsymbol{\psi}_L, {\p \mb{r}_L}/{\p {\omega}_{\mathrm{bif}}}^\intercal \boldsymbol{\psi}_L$, most of the terms are zero, except ${\p \mb{r}_L}/{\p {\mb{q}_R}_{\mathrm{bif}, r}}^\intercal \boldsymbol{\psi}_L, {\p \mb{r}_L}/{\p {\mb{q}_R}_{\mathrm{bif}, i}}^\intercal\boldsymbol{\psi}_L$ due to the fact that the normalization condition of the left EVP relies on the knowledge of the right EVP (see Eq.~\ref{eq:l_eig}).

\end{document}